\documentclass[A4paper, 12pt]{article}
\usepackage[utf8]{inputenc}
\usepackage[OT2, T1]{fontenc}
\usepackage{cite}
\usepackage{amsmath}
\usepackage{amsfonts}
\usepackage{amssymb}
\usepackage{amsxtra}
\usepackage{mathrsfs}
\usepackage{tensor}
\usepackage[thmmarks, amsmath, thref, amsthm]{ntheorem}
\usepackage[nottoc]{tocbibind}
\usepackage[pagebackref=true]{hyperref}

\usepackage{graphicx}
\usepackage[all]{xy}
\usepackage[portrait, top=2cm, bottom=2cm, left=2cm, right=2cm]{geometry}
\usepackage{enumerate}

\numberwithin{equation}{section}

\theoremstyle{plain}
\newtheorem{thm}{Theorem}[subsection]
\newtheorem{lemm}[thm]{Lemma}
\newtheorem{prop}[thm]{Proposition}
\newtheorem{coro}[thm]{Corollary}

\theoremstyle{definition}
\newtheorem{defn}[thm]{Definition}
\newtheorem{remk}[thm]{Remarks}

\newcommand{\alg}{\text{-}\operatorname{alg}}
\newcommand{\Aut}{\operatorname{Aut}}
\newcommand{\Bit}{\operatorname{Bit}}
\newcommand{\BIT}{\operatorname{BIT}}
\newcommand{\BPP}{\operatorname{BPP}}
\newcommand{\Br}{\operatorname{Br}}
\newcommand{\Coker}{\operatorname{Coker}}
\newcommand{\Ext}{\operatorname{Ext}}
\newcommand{\Gal}{\operatorname{Gal}}
\newcommand{\GL}{\operatorname{GL}}
\newcommand{\grp}{\text{-}\operatorname{grp}}
\newcommand{\Hom}{\operatorname{Hom}}
\newcommand{\id}{\operatorname{id}}
\newcommand{\iHom}{\mathscr{H}\kern -.5pt om}
\newcommand{\ind}{\operatorname{ind}}
\newcommand{\Inn}{\operatorname{Inn}}
\newcommand{\Ker}{\operatorname{Ker}}
\newcommand{\lcm}{\operatorname{lcm}}

\newcommand{\PGL}{\operatorname{PGL}}
\newcommand{\SL}{\operatorname{SL}}
\newcommand{\Spec}{\operatorname{Spec}}
\newcommand{\tors}{\operatorname{tors}}
\newcommand{\Mor}{\operatorname{Mor}}

\newcommand{\Cbb}{\mathbb{C}}
\newcommand{\Gbb}{\mathbb{G}}
\newcommand{\Hbb}{\mathbb{H}}
\newcommand{\Qbb}{\mathbb{Q}}
\newcommand{\Zbb}{\mathbb{Z}}

\newcommand{\Bcal}{\mathcal{B}}
\newcommand{\Dcal}{\mathcal{D}}
\newcommand{\Ecal}{\mathcal{E}}
\newcommand{\Fcal}{\mathcal{F}}
\newcommand{\Ocal}{\mathcal{O}}
\newcommand{\Pcal}{\mathcal{P}}
\newcommand{\Ucal}{\mathcal{U}}

\renewcommand{\C}{\mathrm{C}}
\renewcommand{\H}{\mathrm{H}}
\newcommand{\R}{\mathrm{R}}
\newcommand{\Z}{\mathrm{Z}}
\newcommand{\N}{\mathrm{N}}

\newcommand{\Cscr}{\mathscr{C}}
\newcommand{\Escr}{\mathscr{E}}
\newcommand{\Lscr}{\mathscr{L}}
\newcommand{\Vscr}{\mathscr{V}}
\newcommand{\Tscr}{\mathscr{T}}
\newcommand{\Xscr}{\mathscr{X}}

\DeclareSymbolFont{cyrletters}{OT2}{wncyr}{m}{n}
\DeclareMathSymbol{\Sha}{\mathalpha}{cyrletters}{"58}

\newcommand{\del}{\partial}

\title{Mayer--Vietoris sequences for complexes of tori}
\author{Nguy\~{\^{e}}n M\d{a}nh Linh}
\date{\today}

\begin{document}
\maketitle

{\em Corresponding author:} Nguy\~{\^{e}}n M\d{a}nh Linh, Institut de Math\'ematiques de Jussieu - Paris Rive Gauche, CNRS, Tour 15-16, Sorbonne Université, 4 place Jussieu, 75252 Paris cedex 05, France.

{\em Email address:} mlnguyen@imj-prg.fr

\begin{abstract}
    In the patching setting, given a factorization inverse system of fields over which patching for finite-dimensional vector spaces holds, together with a crossed module over the inverse limit field, the corresponding six-term Mayer--Vietoris sequence is constructed, generalizing the classical result of Harbater--Hartmann--Krashen for linear algebraic groups. When the crossed module is a two-term complex of tori, the above sequence is extended into a nine-term exact sequence, notably without any assumption on global domination of Galois cohomology of the inverse system. As an application, we show that patching holds for nonabelian second Galois cohomology of reductive groups with smooth centers. We then obtain a weak local--global principle for this cohomology set in the simply connected semisimple case. We also rediscover a well-known local--global principle for indices of central simple algebras.
\end{abstract}

{\em Keywords.} Galois cohomology, nonabelian cohomology, patching, linear algebraic groups.

{\em 2020 Mathematics subject classifications (MSC).} 11E72, 12E30.

\tableofcontents

\section{Introduction} \label{sec:Intro}

\subsection{Patching} \label{subsec:Context}

Given an algebraic number field $k$ and a finite group $G$, the classical {\em inverse Galois problem} asks whether there exists a finite Galois extension of $k$ with Galois group $G$. Despite having been solved for several families of finite groups, the problem remains open in full generality.

In recent years, interest in the inverse Galois problem over arithmetico-geometric analogues of algebraic number fields has been rising. Among the available techniques at our disposal, {\em patching} is a method that has yielded fruitful results. Inspired by the Riemann existence theorem over $\Cbb(X)$, the method was developed by Harbater, Hartmann, Krashen, and several other authors (see {\em e.g.} \cite{Harbater2003Patching, HH2010Patching}) to study arithmetic problems over {\em semiglobal fields}, that is, fields of the form $K(X)$ for some complete discretely valued field $K$ and some (smooth projective geometrically integral) $K$-curve $X$. A notable result in this direction is a theorem by Harbater, which states that any finite group is a Galois group over $\Qbb_p(t)$ \cite{Harbater1987Galois, Liu1995Galois, CT2000Galois, Kollar2000Fundamental, Kollar2003Rational}.

As in \cite[\S{2}]{HHK2015LocalGlobal}, we consider the general setting for patching over fields as follows. A {\em factorization inverse system of fields} is given by the data of a nonempty graph $\Gamma = (\Vscr,\Escr)$ on finitely many vertices and edges (multiple edges are allowed, but loops are not), together with a field $F_i$ for each vertex $i \in \Vscr$, a field $F_k$ for each edge $k \in \Escr$, and a field inclusion $F_i \hookrightarrow F_k$ whenever $i$ is a vertex of $k$. We assume that the inverse limit of the corresponding system (in the category of commutative rings) is a field $F$. In other words, we have an equalizer diagram
    \begin{equation*}
        1 \to F \to \prod_{i \in \Vscr} F_i \rightrightarrows \prod_{k \in \Vscr} F_k
    \end{equation*}
of fields (in particular, $\Gamma$ is connected). We refer to $\Gamma$ as the {\em reduction graph} of $\Fcal$.

The typical situation where patching is applied is the case where $F = K(X)$ is a semiglobal field as above. To construct the reduction graph $\Gamma$ as well as the overfields $F_i$ and $F_k$, we need some notations. Let $T$ be the valuation ring of $K$ and $t \in T$ a uniformizer. Let $\Xscr \to \Spec(T)$ be a flat, projective $T$-curve, which is normal and integral, with generic fibre $X$ (a {\em normal model} of $F$). The reduced closed fibre $Y \subseteq \Xscr$ is a union of finitely many smooth connected curves over the residue field $k$ of $T$. Fix a finite nonempty subset $\Pcal \subseteq Y$ containing the points at which $Y$ is not unibranched.

    \begin{itemize}
        \item For $P \in \Pcal$, denote by $F_P$ the field of fractions of the complete local ring $\hat{R}_P:=\hat{\Ocal}_{\Xscr,P}$ at $P$.

        \item Let $\Ucal$ denote the set of connected components of $Y \setminus \Pcal$. For $U \in \Ucal$, let $\hat{R}_U$ denote the $t$-adic completion of $R_U:=\bigcap_{P \in U} \Ocal_{\Xscr,P}$, and let $F_U$ be its field of fractions.
        
        \item Whenever a point $P \in \Pcal$ lies in the closure $\bar{U} \subseteq Y$ of a component $U \in \Ucal$, we obtain a {\em branch} $\wp = (U,P)$ at $P$, that is, a height one prime ideal of $\hat{R}_P$ containing $t$. Let $\hat{R}_{\wp}$ be the completion of the localization of $\hat{R}_P$ at $\wp$, and let $F_{\wp}$ denote its field of fractions. Let $\Bcal$ denote the set of branches at the points of $\Pcal$.
    \end{itemize}
The set $\Pcal$ then yields a (bipartite) graph $\Gamma_{\Pcal} = (\Ucal \sqcup \Pcal, \Bcal)$, and the family $((F_{\xi})_{\xi \in \Ucal \sqcup \Pcal}, (F_{\wp})_{\wp \in \Bcal})$ is a factorization inverse system of fields with limit $F$ (see \cite[Proposition 6.3]{HH2010Patching} and \cite[Proposition 3.3]{HHK2015LocalGlobal}).

In this article, for any factorization inverse system $\Fcal = ((F_i)_{i \in \Vscr}, (F_k)_{k \in \Escr})$ of fields, we assume that the reduction graph $\Gamma = (\Vscr,\Escr)$ is equipped with an orientation, that is, the choice of a head $l(k) \in \Vscr$ (hence, a tail $r(k) \in \Vscr$) for each edge $k \in \Escr$. We say that {\em patching for finite-dimensional vector spaces holds over $\Fcal$} if, given a finite-dimensional $F_i$-vector space $V_i$ for each $i \in \Vscr$ and an $F_k$-linear isomorphism $\mu_k: V_{l(k)} \otimes_{F_{l(k)}} F_k \xrightarrow{\cong} V_{r(k)} \otimes_{F_{r(k)}} F_k$ for each $k \in \Escr$, there exists a unique (up to isomorphism) $F$-vector space $V$ together with $F_i$-linear isomorphisms $\nu_i: V \otimes_F F_i \xrightarrow{\cong} V_i$ (for $i \in \Vscr$) such that $(\nu_{r(k)} \otimes_{F_{r(k)}} \id_{F_k}) = \mu_k \circ (\nu_{l(k)} \otimes_{F_{l(k)}} \id_{F_k})$ for all $k \in \Escr$ (note that this does not depend on the chosen orientation). In matrix form, this property is equivalent to the fact that for any integer $n \ge 1$ and any family $(g_k)_{k \in \Escr} \in \prod_{k \in \Escr} \GL_n(F_k)$, there exists a family $(g'_i)_{i \in \Vscr} \in \prod_{i \in \Vscr} \GL_n(F_i)$ such that $g_k = (g'_{r(k)})^{-1} g'_{l(k)}$ for all $k \in \Escr$ (the so-called {\em simultaneous matrix factorization property}) \cite[Proposition 2.2]{HHK2015LocalGlobal}. By \cite[Theorem 6.4]{HH2010Patching}, patching for finite-dimensional vector spaces holds over factorization inverse systems arising from semiglobal fields mentioned in the previous paragraph.

It is under the above conditions that many arithmetic problems over $F$ and over the fields $F_i$'s and $F_k$'s can be related. For instance, it is possible to show that patching for finite-dimensional vector spaces implies patching for {\em torsors} under linear algebraic groups. Recall that a torsor under a smooth $F$-algebraic group $G$ is a nonempty $F$-variety $P$ equipped with a right action $P \times_F G \to P$ which is simply transitive on $\bar{F}$-points (where $\bar{F}$ denotes a separable closure of $F$). These torsors are classified by the nonabelian Galois cohomology set $\H^1(F,G)$ (which can defined in terms of cocycles). The above result about patching torsors (for $G$ linear) can then be summarized in terms of a six-term {\em Mayer--Vietoris exact sequence}
    \begin{equation} \label{eq:SixTermMayerVietorisGroup}
        \xymatrix{
            1 \ar[r] & G(F) \ar[r] & \prod_{i \in \Vscr} G(F_i) \ar[r] & \prod_{k \in \Escr} G(F_k) \ar[lld]_{\delta} \\
            & \H^1(F,G) \ar[r] & \prod_{i \in \Vscr} \H^1(F_i,G) \ar@<-.5ex>[r] \ar@<.5ex>[r] & \prod_{k \in \Vscr} \H^1(F_k,G)
        }
    \end{equation}
of pointed sets. Here, the map $\prod_{i \in \Vscr} G(F_i) \to \prod_{k \in \Escr} G(F_k)$ is given by $(g'_i)_{i \in \Vscr} \mapsto ((g'_{r(k)})^{-1}g'_{l(k)})_{k \in \Vscr}$. The ``connecting map'' $\delta$ has the following description. Let $(g_k)_{k \in \Escr} \in \prod_{k \in \Escr} G(F_k)$. For $k \in \Escr$, let $\mu_k: G_{F_k} \to G_{F_k}$ be the left translation by $g_k$ (which is an isomorphism of $F_k$-torsors under $G$). Then $((G_{F_i})_{i \in \Vscr},(\mu_k)_{k \in \Escr})$ is a {\em $G$-torsor patching problem over $\Fcal$} ({\em cf.} \cite[p. 1566]{HHK2015LocalGlobal}), which has a unique (up to isomorphism) solution given by an $F$-torsor $P$ under $G$, given our assumption that patching for finite-dimensional vector spaces holds over $\Fcal$ \cite[Theorem 2.3]{HHK2015LocalGlobal}. Then, $\delta$ takes $(g_k)_{k \in \Escr}$ to the class $[P] \in \H^1(F,G)$ of this torsor ({\em cf.} \cite[Theorem 2.4]{HHK2015LocalGlobal} and its proof).

\subsection{Organization of the manuscript} \label{subsec:Organization}

The aim of the present article is to study analogues of \eqref{eq:SixTermMayerVietorisGroup} when the linear algebraic $F$-group $G$ is replaced by a $2$-term complex of $F$-tori. In Section \ref{sec:CrossedModule}, we consider the general nonabelian setting, where a {\em crossed module $\Cscr = [G \xrightarrow{\del} H]$} takes the place of the linear algebraic group $G$. It is possible to define the Galois hypercohomology groups $\Hbb^i(F, \Cscr)$ for $i = -1, 0$. While the definition of $\Hbb^{-1}(F,\Cscr)$ is rather straightforward, a geometric description of $\Hbb^0(F,\Cscr)$ can be given using {\em bitorsors} under the crossed module $\Cscr$ (see Definition \ref{defn:Bitorsor} and Proposition \ref{prop:BitorsorH0}). This slightly generalizes the classical notion of bitorsor under a linear algebraic group. With these descriptions at our disposal, we construct the six-term Mayer--Vietoris sequence for $\Cscr$ (Theorem \ref{thm:MayerVietorisCrossedModule}), which in particular entails the bitorsor patching problem. The proof relies on the Mayer--Vietoris sequence \eqref{eq:SixTermMayerVietorisGroup} for linear algebraic groups with some consideration on the ``bitorsor condition''. 

Next, we investigate the commutative setting. When $G$ is a commutative algebraic group, sequence \eqref{eq:SixTermMayerVietorisGroup} can be extended into a long exact sequence using the higher Galois cohomology groups $\H^i(-,G)$, under an additional condition of {\em global domination} of Galois cohomology \cite[Definition 2.3.3 and Theorem 2.5.1]{HHK2014LocalGlobal}. This latter holds for factorization inverse systems arising from semiglobal fields, with some restriction on the residual characteristic of $K$ \cite[Theorem 3.1.2]{HHK2014LocalGlobal}. In Section \ref{sec:MayerVietoris}, we consider the case where the crossed module $\Cscr$ is a $2$-term of complex $\Tscr = [T_1 \xrightarrow{\del} T_2]$ of $F$-tori (the Galois hypercohomology groups $\Hbb^i(-,\Tscr)$ are hence defined for all $i \ge -1$). There, the six-term Mayer--Vietoris sequence from Theorem \ref{thm:MayerVietorisCrossedModule} is extended into a nine-term exact sequence using $\Hbb^1(-,\Tscr)$ (Theorem \ref{thm:NineTermMayerVietorisComplexTori}). The proof is by d\'evissage and reducing to the case of quasitrivial tori. In particular, for any smooth $F$-group of multiplicative type $M$, we have a nine-term exact sequence relating $\H^i(-,M)$ for $i = 0,1,2$ (quite surprisingly, the validity of this sequence is {\em without any global domination condition} on Galois cohomology over $\Fcal$). 

Finally, in Section \ref{sec:Application}, we give an application of Theorem \ref{thm:NineTermMayerVietorisComplexTori} to the study of nonabelian second Galois cohomology of reductive groups. We prove that patching for (nonabelian) $\H^2$ holds for these groups whenever the center is smooth ({\em cf.} Corollary \ref{coro:PatchingNonabelianH2ConnectedReductive}); we insist on the fact that no global domination condition on the factorization inverse system is required. When the group is simply connected semisimple, we show a weak local--global principle for nonabelian $\H^2$ (Theorem \ref{thm:LocalGlobalPrincipleH2SimplyConnected}). To this end, we exploit the classification of center of simply connected groups and a theorem of Endo--Miyata on flasque tori split by a metacylic extension \cite{EM1975Tori}. Another application is an alternative proof of the local--global principle for indices of central simple algebras over a factorization inverse system ({\em cf.} Theorem \ref{thm:Index}). One can think of this result as a local--global principle for {\em neutrality} of elements in the nonabelian $\H^2$ set for groups of inner type ${}^1 A_n$ ({\em cf.} Remarks \ref{remk:Index}).

\subsection{Notations and conventions} \label{subsec:Notations}

The following notations shall be deployed throughout the article.

{\bf Fields.} When $F$ is a field, we use $\bar{F}$ to denote a fixed {\em separable} closure of $F$ and $\Gal_F:=\Gal_{\bar{F}/F}$ to denote its absolute Galois group. For an object $X$ defined over $F$ (variety, algebraic group...) and a field extension $E/F$, we write $X_E$ for the base change of $X$ to $E$). If $\eta$ is an element of some Galois hypercohomology set $\Hbb^r(F,-)$, its restriction to $E$ is an element of $\Hbb^r(E,-)$ and shall be denoted by $\eta|_E$.

{\bf Cohomology.} All cohomology groups and sets will be Galois cohomology. Unless specified, when we say that $[G \to H]$ is a $2$-term complex (commutative or noncommutative), we implicitly assume that $G$ and $H$ are placed in respective degrees $-1$ and $0$.

{\bf Algebraic groups.} When $G$ is an algebraic group over a field $F$, we use $Z(G)$ to denote its center. For a finite separable extension $E/F$, we write $\R_{E/F}$ for the restriction-of-scalars  functor {\em \`a la Weil}. By functoriality, it takes algebraic groups over $E$ to algebraic groups over $F$. The base change of this functor to $\bar{F}$ is $X \mapsto X \times_{\bar{F}} \cdots \times_{\bar{F}} X$ (${[E:F]}$ times) (if $E/F$ is Galois, then this is true already by base change to $E$).

{\bf Tate--Shafarevitch groups.} Let $\Fcal = ((F_i)_{i \in \Vscr}, (F_k)_{k \in \Escr})$ be a factorization inverse system of fields with inverse limit a field $F$. When $\Cscr$ is either a (bounded below) complex of commutative algebraic groups or a crossed module over $F$ (see Section \ref{sec:CrossedModule} below) and $r$ is an integer such that $\Hbb^r(E,\Cscr)$ is defined for any overfield $E/F$, we denote
    \begin{equation*}
        \Sha^r(F,\Cscr):=\Ker\left(\Hbb^r(F,\Cscr) \to \prod_{i \in \Vscr} \Hbb^r(F_i, \Cscr)\right).
    \end{equation*}

{\bf Cartier duality.} Let $F$ be a field. If $G$ is an algebraic $F$-group, we denote by $\hat{G} := \iHom_F(G,\Gbb_{m,F})$ its Galois module of geometric characters, that is, $\hat{G} = \Hom_{\bar{F}}(G_{\bar{F}},\Gbb_{m,\bar{F}})$ as a (finitely generated) abelian group, equipped with the action of $\Gal_F$ defined by $(\tensor[^s]{\chi}{})(g) := \tensor[^s]{\chi}{}(\tensor[^{s^{-1}}]{g}{})$ for all $s \in \Gal_F, \chi \in \hat{G}$, and $g \in G(\bar{F})$. An $F$-{\em group of multiplicative type} is an $F$-form of $\Gbb_{m,F}^n \times_F \prod_{i=1}^m \mu_{n_i,F}$, where $n \ge 0$ and $n_i \ge 1$. An $F$-{\em torus} is a connected $F$-group of multiplicative type ({\em i.e.}, an $F$-form of $\Gbb_{m,F}^n$). The formation $M \mapsto \hat{M}$ is an additive exact anti-equivalence between the category of smooth $F$-groups of multiplicative type and that of finitely generated abelian groups whose torsion part has order invertible in $F$, equipped with a continuous action of $\Gal_F$, called {\em Cartier duality}. We say that an $F$-group of multiplicative type $M$ is {\em split} by an extension $E/F$ if $\Gal_E$ acts trivially on $\hat{M}$. 

An $F$-torus $T$ is said to be {\em quasitrivial} if it is isomorphic to the product of tori of the form $\R_{E/F}(\Gbb_{m,E})$, where $E/F$ are finite separable extensions. In this case, one has $\H^1(F,T) = 0$ by Shapiro's lemma and Hilbert's Theorem 90.
 
\section{Crossed modules} \label{sec:CrossedModule}

\subsection{Galois hypercohomology and bitorsors} \label{subsec:CrossedModuleH0}

Let $F$ be a field. We recall the notion of crossed module from \cite[Definition 2.1]{Borovoi1992Hypercohomology}. A {\em crossed module} over $F$ is a complex $\Cscr = [G \xrightarrow{\del} H$] of smooth linear algebraic groups over $F$ (regarded as a $2$-term complex, where $G$ and $H$ are placed in respective degrees $-1$ and $0$) together with a left action 
    \begin{equation*}
        H \times_F G \to G, \quad (h,g) \mapsto h \cdot g
    \end{equation*}
of $H$ on $G$ (by automorphisms of algebraic groups) satisfying
    \begin{equation} \label{eq:CrossModule}
        \begin{cases}
            gg'g^{-1} & = \del(g) \cdot g'\\
            \del(h \cdot g) & = h \del(g) h^{-1}
        \end{cases}
    \end{equation}
for any $h \in H(\bar{F})$ and $g,g' \in G(\bar{F})$. Its $(-1)^{\text{st}}$ Galois hypercohomology group is defined by
    \begin{equation*}
        \Hbb^{-1}(F, \Cscr):=(\Ker \del)(F).
    \end{equation*}
The condition \eqref{eq:CrossModule} forces $\Ker \del$ to be central in $G$. In particular, the above group is abelian.

A $0$-cochain with coefficients in $\Cscr$ is, by definition, a pair $(\alpha,h)$, where $\alpha: \Gal_F \to G(\bar{F})$ is a continuous ({\em i.e.} locally constant) map and $h \in H(\bar{F})$. It is easily checked that the set $\C^0(F,\Cscr)$ of such cochains carries a natural group law, given by
    \begin{equation} \label{eq:CrossModuleGroupLaw}
        (\alpha,h) (\beta,h'):= (\tensor[^h]{\beta}{}\alpha , hh'),
    \end{equation}\
where $(\tensor[^h]{\beta}{}\alpha)_s:=(h \cdot \beta_s)\alpha_s $ for any $s \in \Gal_F$. 

Let $\Z^0(F,\Cscr)$ denote the subset of $\C^0(F,\Cscr)$ consisting of $0$-cochains $(\alpha,h)$ such that
    \begin{equation} \label{eq:CrossModuleCocycle}
        \begin{cases}
            \alpha_{st} & = \alpha_s \tensor[^s]{\alpha}{_t}\\
            \del(\alpha_s) \tensor[^s]{h}{} & = h
        \end{cases}
    \end{equation}
for any $s,t \in \Gal_F$; in particular, $\alpha$ is a Galois $1$-cocycle with coefficients in $G$. We then call $(\alpha,h)$ a $0$-cocycle with coefficients in $\Cscr$. One checks that $\Z^0(F,\Cscr)$ is a subgroup of $\C^0(F,\Cscr)$. The map
    \begin{equation*}
        G(\bar{F}) \to \C^0(F,\Cscr), \quad g \mapsto (\delta g,\del(g)),
    \end{equation*}
where $(\delta g)_s:= g \tensor[^s]{g}{^{-1}}$ for any $s \in \Gal_F$, is a group homomorphism whose image is a normal subgroup of $\Z^0(F,\Cscr)$, because
    \begin{equation*}
        (\alpha,h)(\delta g, \del(g)) = (\delta(h \cdot g), \del(h \cdot g)) (\alpha,h)
    \end{equation*}
for any $g \in G(\bar{F})$ and $(\alpha,h) \in \Z^0(F,\Cscr)$. We define $0^{\text{th}}$ Galois hypercohomology group $\Hbb^0(F, \Cscr)$ to be the quotient of $\Z^0(F,\Cscr)$ by this image. Therefore, two $0$-cocycles $(\alpha,h)$ and $(\beta,h')$ with coefficients in $\Cscr$ define the same cohomology class if and only if there exists $g \in G(\bar{F})$ such that
    \begin{equation} \label{eq:CrossModuleCoboundary}
        \begin{cases}
            h' & = \del(g) h\\
            \beta_s & = g \alpha_s \tensor[^s]{g}{^{-1}}
        \end{cases}
    \end{equation}
for any $s \in \Gal_F$.

\begin{defn} \label{defn:Bitorsor}
    An {\em $F$-bitorsor} under $\Cscr$ is a pair $(P,f)$, where $P$ is a right $F$-torsor under $G$ and $f: P \to H$ is a morphism of $F$-varieties such that $f(p \ast g) = f(p) \del(g)$ for any $p \in P(\bar{F})$ and $g \in G(\bar{F})$ (here, $\ast: P \times_F G \to P$ denotes the right action of $G$ on $P$). A morphism $(P,f) \to (P',f')$ of $F$-bitorsors is a $G$-equivariant morphism $\varphi: P \to P'$ such that $f' \circ \varphi = f$.
\end{defn}

Just like morphisms of torsors under $G$, any morphism of bitorsors under $\Cscr$ is {\em de facto} an isomorphism. In other words, the category $\BIT_F(\Cscr)$ of $F$-bitorsors under $\Cscr$ is a groupoid. Let $\Bit_{F}(\Cscr):=\pi_0(\BIT_F(\Cscr))$ denote the set of isomorphism classes of these bitorsors.

\begin{prop} \label{prop:BitorsorH0}
    There is a bijection between $\Hbb^0(F,\Cscr)$ and $\Bit_{F}(\Cscr)$, which is functorial with respect to the field $F$ and the crossed module $\Cscr$.
\end{prop}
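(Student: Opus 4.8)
The plan is to exhibit mutually inverse maps between the cocycle-theoretic description of $\Hbb^0(F,\Cscr)$ developed above and the geometric set $\Bit_F(\Cscr)$, exactly mirroring the classical dictionary between $\H^1(F,G)$ and isomorphism classes of right $F$-torsors under $G$. First I would recall that a Galois $1$-cocycle $\alpha \colon \Gal_F \to G(\bar F)$ gives rise to a right $F$-torsor $P_\alpha$ under $G$: concretely $P_\alpha(\bar F) = G(\bar F)$ with twisted Galois action ${}^{s}\!{}_{\alpha}(x) := \alpha_s\,{}^{s}x$ and right translation as the $G$-action, and every right $F$-torsor arises this way (choosing a $\bar F$-point). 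Given a $0$-cocycle $(\alpha,h) \in \Z^0(F,\Cscr)$, I define $f_{(\alpha,h)} \colon P_\alpha \to H$ on $\bar F$-points by $x \mapsto x^{-1} h$ (using the identification $P_\alpha(\bar F)=G(\bar F) \hookrightarrow H(\bar F)$ via $\del$, so really $f_{(\alpha,h)}(x) = \del(x)^{-1} h$). The equivariance $f(p \ast g) = f(p)\del(g)$ is then a one-line check. The key point is that the second cocycle condition $\del(\alpha_s)\,{}^{s}h = h$ is precisely what makes $f_{(\alpha,h)}$ defined over $F$, i.e.\ $\Gal_F$-equivariant for the twisted action: one computes ${}^{s}\!(f(x)) = \del(\alpha_s)^{-1}\,\del(x)^{-1}\,{}^{s}h = \del(\alpha_s)^{-1}\del(x)^{-1}\del(\alpha_s)^{-1}h$... — more carefully, $f({}^{s}\!{}_\alpha x) = \del(\alpha_s\,{}^{s}x)^{-1} h$, and this must equal ${}^{s}(f(x)) = {}^{s}(\del(x)^{-1}h) = \del({}^{s}x)^{-1}\,{}^{s}h$; the two agree iff $\del(\alpha_s)\,{}^{s}h = h$, which is exactly \eqref{eq:CrossModuleCocycle}. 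Thus $(\alpha,h) \mapsto (P_\alpha, f_{(\alpha,h)})$ is a well-defined assignment $\Z^0(F,\Cscr) \to \Bit_F(\Cscr)$.

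Next I would check that this assignment descends to $\Hbb^0(F,\Cscr)$, i.e.\ that two $0$-cocycles related by \eqref{eq:CrossModuleCoboundary} through some $g \in G(\bar F)$ produce isomorphic bitorsors. The classical fact is that right translation by $g$ on the underlying variety $G_{\bar F}$ intertwines the twisted actions of $\alpha$ and $\beta_s = g\alpha_s\,{}^{s}g^{-1}$, giving an $F$-isomorphism $\varphi \colon P_\alpha \xrightarrow{\sim} P_\beta$; one then verifies $f_{(\beta,h')} \circ \varphi = f_{(\alpha,h)}$ using $h' = \del(g)h$. Conversely, given an $F$-bitorsor $(P,f)$, choose a point $p_0 \in P(\bar F)$; this trivializes $P$ and produces a $1$-cocycle $\alpha$ with $P \cong P_\alpha$, and I set $h := f(p_0) \in H(\bar F)$. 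The defining property of $f$ together with its being defined over $F$ forces $(\alpha,h)$ to satisfy \eqref{eq:CrossModuleCocycle}; a different choice of $p_0$ changes $(\alpha,h)$ by a coboundary \eqref{eq:CrossModuleCoboundary}. These two constructions are visibly inverse to one another, giving the bijection.

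Finally, functoriality: for a field extension $E/F$, restriction of cocycles corresponds to base change $P \mapsto P_E$ of bitorsors, and for a morphism $\Cscr \to \Cscr'$ of crossed modules, the induced map on $0$-cocycles corresponds to pushing a bitorsor forward along the contracted-product construction $P \mapsto P \times^{G} G'$ equipped with the evident map to $H'$; both compatibilities are immediate from the explicit formulas. The only mildly delicate point — the step I would flag as the main obstacle — is bookkeeping the non-abelian twisted Galois actions correctly when verifying that $f_{(\alpha,h)}$ descends to $F$ and that the coboundary relation \eqref{eq:CrossModuleCoboundary} is matched on the nose; in particular one must be careful that $P_\alpha$ is only a variety (not a group), so "$x^{-1}h$" must be read as $\del(x)^{-1}h$ and the $H$-action enters solely through $\del$ and the crossed-module relations \eqref{eq:CrossModule}. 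Everything else is routine descent theory, identical to the torsor case.
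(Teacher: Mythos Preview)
Your approach is essentially identical to the paper's, but there is a concrete order-of-multiplication error in your formula. With $f_{(\alpha,h)}(x) = \del(x)^{-1} h$ the bitorsor equivariance from Definition~\ref{defn:Bitorsor} fails:
\[
f(p \ast g) \;=\; \del(pg)^{-1} h \;=\; \del(g)^{-1}\del(p)^{-1} h \;=\; \del(g)^{-1} f(p),
\]
which is not $f(p)\del(g)$ unless $H$ is abelian. The correct formula (the one the paper uses) is $f_{(\alpha,h)}(p) = h^{-1}\del(p)$; then $f(p\ast g) = h^{-1}\del(p)\del(g) = f(p)\del(g)$ holds on the nose, and the Galois-equivariance computation still reduces to the second condition in \eqref{eq:CrossModuleCocycle}. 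Correspondingly, in the inverse direction one must set $h := f(p_0)^{-1}$ rather than $f(p_0)$, and the isomorphism $P_\alpha \to P_\beta$ witnessing the coboundary relation is \emph{left} translation by $g$ (right translation $x \mapsto xg$ is neither right-$G$-equivariant nor compatible with the twisted Galois actions). These are precisely the bookkeeping issues you yourself flagged as the main obstacle; once they are straightened out, your argument and the paper's coincide.
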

\begin{proof}
    Given a $0$-cocycle $(\alpha,h) \in \Z^0(F,\Cscr)$, we have that $\alpha: \Gal_F \to G(\bar{F})$ is a cocycle in view of \eqref{eq:CrossModuleCocycle}. Let $P_\alpha$ be the $F$-torsor under $G$ it defines, that is, $P_{\alpha,\bar{F}} = G_{\bar{F}}$ equipped with the right translation action of $G_{\bar{F}}$ and the twisted Galois action
        \begin{equation*}
            \Gal_F \times G(\bar{F}) \to G(\bar{F}), \quad (s,p) \mapsto \alpha_s \tensor[^s]{p}{}. 
        \end{equation*}
    The $\bar{F}$-morphism
        \begin{equation*}
            P_{\alpha,\bar{F}} \to H_{\bar{F}}, 
            \quad p \mapsto h^{-1} \del(p) 
        \end{equation*}
    is compatible with the action of $\Gal_F$, because
        \begin{equation*}
            h^{-1} \del(\alpha_s \tensor[^s]{p}{}) = h^{-1} \del(\alpha_s) \del(\tensor[^s]{p}{}) = (\tensor[^s]{h}{^{-1}}) (\tensor[^s]{\del(p)}{}) = \tensor[^s]{(h^{-1}\del(p))}{}
        \end{equation*}
    for any $s \in \Gal_F$, in view of \eqref{eq:CrossModuleCocycle}. By Galois descent, it gives rise to a morphism $f_{\alpha,h}: P_\alpha \to H$ of $F$-varieties. By construction, it is easy to see that $(P_{\alpha},f_{\alpha,h})$ is an $F$-bitorsor under $\Cscr$. 
    
    Let $(\beta,h') \in \Z^0(F,\Cscr)$ be another $0$-cocycle. It is well-known that any $G$-equivariant isomorphism $\varphi: P_\alpha \to P_\beta$ is given by left translation by an element $g \in G(\bar{F})$ satisfying the second identity in \eqref{eq:CrossModuleCoboundary}. One also checks that $\varphi$ is an isomorphism $(P_{\alpha},f_{\alpha,h}) \to (P_{\beta}, f_{\beta,h'})$ of bitorsors precisely when the first identity in \eqref{eq:CrossModuleCoboundary} holds true. It follows that we have an injective map
        \begin{align*}
            \Phi: \Hbb^0(F,\Cscr) & \to \Bit_{F}(\Cscr) \\
            [\alpha,h] & \mapsto [P_\alpha,f_{\alpha,h}].
        \end{align*}
    We show the surjectivity of $\Phi$. Let $(P,f)$ be a bitorsor under $\Cscr$. Choose a point $p \in P(\bar{F})$ and define the $1$-cocycle  $\alpha: \Gal_F \to G(\bar{F})$ via the relation
        \begin{equation*}
            \tensor[^s]{p}{} = p \ast \alpha_s
        \end{equation*}
    for $s \in \Gal_F$. Then
        \begin{equation*}
            \varphi: P_\alpha \to P, \quad g \mapsto p \ast g
        \end{equation*}
    is an isomorphism of $F$-torsors under $G$. Let $h:=f(p)^{-1} \in H(\bar{F})$. For $s \in \Gal_F$, one has
        \begin{equation*}
            \tensor[^s]{h}{^{-1}} = \tensor[^s]{f(p)}{} = f(\tensor[^s]{p}{}) = f(p \ast \alpha_s) = f(p) \del(\alpha_s) = h^{-1} \del(\alpha_s)
        \end{equation*}
    by definition of bitorsors. In view of \eqref{eq:CrossModuleCocycle}, the pair $(\alpha,h)$ is a $0$-cocycle with coefficients in $\Cscr$. Furthermore, one has $f \circ \varphi = f_{\alpha,h}$, for if $g \in P_\alpha(\bar{F})$, then
        \begin{equation*}
            f(\varphi(g)) = f(p \ast g) = f(p) \del(g) = h^{-1} \del(g) = f_{\alpha,h}(g).
        \end{equation*}
    Thus, we have $[P,f] = [P_{\alpha},f_{\alpha,h}]$, proving that $\Phi$ is surjective. Functoriality follows easily from construction.
\end{proof}

It is not hard to see that the neutral element $[(s \mapsto 1_G),1_H]$ of $\Hbb^0(F,\Cscr)$, under the bijection from Proposition \ref{prop:BitorsorH0}, corresponds to the class of the {\em trivial bitorsor} $(G,\del)$.

\begin{remk} \label{remk:BitorsorGroupLaw}
    If $(P,f)$ is an $F$-bitorsor under $\Cscr$, then the $F$-morphism
        \begin{equation*}
            \lambda:G \times_F P \to P, \quad (g,p) \mapsto p \ast (f(p)^{-1} \cdot g)
        \end{equation*}
    is a left action of $G$ on $P$. Indeed, for $p \in P(\bar{F})$, one has 
        \begin{equation*}
            \lambda(1_G,p) = p \ast (f(p)^{-1} \cdot 1_G) = p \ast 1_G = p.
        \end{equation*}
    Furthermore, for $g,g' \in G(\bar{F})$, since   
        \begin{equation} \label{eq:BitorsorLeftAction}
            f(\lambda(g,p)) = f(p \ast (f(p)^{-1} \cdot g)) = f(p) \del(f(p)^{-1} \cdot g) =  f(p) f(p)^{-1} \del(g) f(p) = \del(g) f(p).
        \end{equation}
    in view of \eqref{eq:CrossModule}, one has
        \begin{align*}
            \lambda(g',\lambda(g,p)) & = \lambda(g,p) \ast (f(\lambda(g,p))^{-1} \cdot g') \\
            & = (p \ast (f(p)^{-1} \cdot g)) \ast ((f(p)^{-1}\del(g)^{-1}) \cdot g') \\
            & = (p \ast (f(p)^{-1} \cdot g)) \ast (f(p)^{-1} \cdot (g^{-1} g' g)), & \text{by \eqref{eq:CrossModule},} \\
            & = p\ast ((f(p)^{-1} \cdot g)(f(p)^{-1} \cdot (g^{-1} g' g))) \\
            & = p \ast (f(p)^{-1} \cdot (gg^{-1} g' g)) \\
            & = p \ast (f(p)^{-1} \cdot (g'g)) \\
            & = \lambda(g'g, p).
        \end{align*}
    This left action is compatible with the right action $\ast: P \times_F G \to P$. Indeed, one has
        \begin{align*}
            \lambda(g, p \ast g') & = p \ast g' \ast (f(p \ast g')^{-1} \cdot g) \\
            & = (p \ast g') \ast ((\del(g')^{-1} f(p)^{-1}) \cdot g) \\
            & = (p \ast g') \ast ((g')^{-1}(f(p)^{-1} \cdot g)g'), & \text{by \eqref{eq:CrossModule},} \\
            & = (p \ast (f(p)^{-1} \cdot g)) \ast g'\\
            & = \lambda(g,p) \ast g'.
        \end{align*}
    In other words, the actions $\lambda$ and $\ast$ equip $P$ with a structure of an $F$-bitorsor under $G$ in the traditional sense (this justifies the terminology).

    Let $(P',f')$ be a second $F$-bitorsor under $\Cscr$. One may form the {\em contracted product} $P'':=P' \times^G_F P$, that is, $P''$ is the quotient of $P' \times_F P$ by the left action of $G$ given by
        \begin{equation} \label{eq:BitorsorContractedProduct}
            G \times_F P' \times_F P \to P' \times_F P, \quad (g,p',p) \mapsto (p' \ast g^{-1}, \lambda(g,p)).
        \end{equation}
    This is again an $F$-bitorsor under $G$. In particular, if $\pi: P' \times_F P \to P''$ denotes the canonical projection, then the right action $\ast: P'' \times_F G \to P''$ of $G$ on $P''$ satisfies
        \begin{equation*}
            \pi(p',p) \ast g = \pi(p', p \ast g)
        \end{equation*}
    for all $p' \in P'(\bar{F})$, $p \in P(\bar{F})$, and $g \in G(\bar{F})$. Furthermore, the $F$-morphism
        \begin{equation*}
            P' \times_F P \to H, \quad (p',p) \mapsto f'(p')f(p)
        \end{equation*}
    is invariant with respect to the left action \eqref{eq:BitorsorContractedProduct}. Indeed, one has
        \begin{equation*}
            f'(p' \ast g^{-1}) f(\lambda(g,p)) = f'(p') \del(g^{-1}) \del(g) f(p) = f'(p')f(p)
        \end{equation*}
    by virtue of \eqref{eq:BitorsorLeftAction} and the very definition of bitorsors under a crossed module. Therefore, the above morphism induces an $F$-morphism $f'': P'' \to H$ such that $f''(\pi(p',p)) = f'(p')f(p)$ for all $p' \in P'(\bar{F})$ and $p \in P(\bar{F})$. Additionally, it is clear that the pair $(P'',f'')$ is an $F$-bitorsor under $\Cscr$.

    By Proposition \ref{prop:BitorsorH0}, the set $\Bit_F(\Cscr)$ is equipped with a natural group law. We claim that the class $[P'',f'']$ is precisely the product $[P,f] [P',f']$. Indeed, following the proof of the same proposition, the choice of geometric points $p \in P(\bar{F})$ and $p' \in P'(\bar{F})$ yields $0$-cocycles $(\alpha,h)$ and $(\beta,h')$ representing $[P,f]$ and $[P',f']$ respectively, such that
        \begin{equation*}
            \tensor[^s]{p}{} = p \ast \alpha_s, \quad h = f(p)^{-1}, \quad \tensor[^s]{p}{}' = p' \ast \beta_s, \text{ and} \quad h' = f(p')^{-1}
        \end{equation*}
    for any $s \in \Gal_F$. Consider the point $p'':=\pi(p',p) \in P''(\bar{F})$. It satisfies $f''(\pi(p',p))^{-1} = f(p)^{-1}f'(p')^{-1} = hh'$ and
        \begin{align*}
            \tensor[^s]{\pi(p',p)}{} & = \pi(\tensor[^s]{p}{}',\tensor[^s]{p}{}) = \pi(p' \ast \beta_s, p \ast \alpha_s) = \pi(p' \ast \beta_s, p) \ast \alpha_s = \pi(p', \lambda(\beta_s,p)) \ast \alpha_s \\
            & = \pi(p', p \ast (f(p)^{-1} \cdot \beta_s)) \ast \alpha_s = \pi(p', p \ast(h \cdot \beta_s)) \ast \alpha_s = \pi(p',p) \ast ((h \cdot \beta_s) \alpha_s).
        \end{align*}
    In view of \eqref{eq:CrossModuleGroupLaw}, the class $[P'',f'']$ is represented by the product cocycle $(\alpha,h)(\beta,h')$.
\end{remk}

\begin{lemm} \label{lemm:TrivialBitorsor}
    Let $(P,f)$ be an $F$-bitorsor under $\Cscr$. Then, the following are true.

    \begin{enumerate}
        \item \label{lemm:TrivialBitorsor:1} The bitorsor $(P,f)$ is trivial (that is, isomorphic to $(G,\del)$) if and only if the fibre $f^{-1}(1_H)$ contains an $F$-rational point (in particular, it is nonempty).

        \item \label{lemm:TrivialBitorsor:2} The automorphism group of the trivial bitorsor $(G,\del)$ is  $\Hbb^{-1}(F,\Cscr) = (\Ker \del)(F)$.
    \end{enumerate}
\end{lemm}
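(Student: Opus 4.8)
The plan is to translate everything into the cocycle picture via Proposition \ref{prop:BitorsorH0} and then do a short direct computation. For part \eqref{lemm:TrivialBitorsor:1}, recall from the discussion preceding Definition \ref{defn:Bitorsor} that the trivial bitorsor $(G,\del)$ corresponds to the neutral class $[1,1] \in \Hbb^0(F,\Cscr)$. Pick a geometric point $p \in P(\bar F)$ and let $(\alpha,h)$ be the associated $0$-cocycle from the proof of Proposition \ref{prop:BitorsorH0}, so that $\tensor[^s]{p}{} = p \ast \alpha_s$ and $h = f(p)^{-1}$. By the coboundary relation \eqref{eq:CrossModuleCoboundary}, the class $[\alpha,h]$ is trivial if and only if there is $g \in G(\bar F)$ with $1 = \del(g)h$ and $1 = g\alpha_s\tensor[^s]{g}{^{-1}}$ for all $s$; the second condition says exactly that $p \ast g^{-1} \in P(F)$, and then the first condition says $\del(g) = h^{-1} = f(p)$, i.e. $f(p \ast g^{-1}) = f(p)\del(g)^{-1} = 1$. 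Conversely, if $q \in f^{-1}(1)(F)$, running the construction of Proposition \ref{prop:BitorsorH0} with the basepoint $q$ gives the cocycle $(\delta q \cdot(\text{triv}),\,f(q)^{-1}) = (1,1)$ since $q$ is $F$-rational and $f(q)=1$. Hence $(P,f)$ is trivial iff $f^{-1}(1)(F) \ne \varnothing$.

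For part \eqref{lemm:TrivialBitorsor:2}, an automorphism of $(G,\del)$ is by Definition \ref{defn:Bitorsor} a $G$-equivariant automorphism $\varphi$ of the trivial $G$-torsor $G$ with $\del \circ \varphi = \del$. Such a $\varphi$ is left translation by some $g \in G(F)$ (here $g$ is rational because the torsor is split and $\varphi$ is defined over $F$), and the compatibility $\del(\varphi(x)) = \del(gx) = \del(g)\del(x)$ must equal $\del(x)$ for all $x$, forcing $\del(g) = 1$, i.e. $g \in (\Ker\del)(F)$. Conversely any such $g$ gives a bitorsor automorphism. One checks this identification is a group isomorphism (composition of automorphisms corresponds to the product in $(\Ker\del)(F)$), and $(\Ker\del)(F) = \Hbb^{-1}(F,\Cscr)$ by definition; centrality of $\Ker\del$ in $G$ noted after the definition of $\Hbb^{-1}$ makes this group abelian, consistent with the fact that automorphism groups of objects in a category need not be, but here it is.

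I do not expect any serious obstacle: both parts are formal consequences of the cocycle description already set up in Proposition \ref{prop:BitorsorH0}. The one point requiring a little care is the passage between ``$\varphi$ is left translation by $g \in G(\bar F)$'' and the rationality $g \in G(F)$ in part \eqref{lemm:TrivialBitorsor:2}: this uses that $\varphi$ is an $F$-morphism of the \emph{split} torsor $G$, so that evaluating at the identity section $1 \in G(F)$ gives $g := \varphi(1) \in G(F)$, and $G$-equivariance then forces $\varphi(x) = gx$ everywhere. Similarly in part \eqref{lemm:TrivialBitorsor:1} one should be careful to note that the basepoint-change formula in Proposition \ref{prop:BitorsorH0} shows the resulting cohomology class is independent of the chosen point, so exhibiting a single point of $f^{-1}(1)(F)$ suffices to conclude triviality.
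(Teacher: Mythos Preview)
Your proof is correct. Part~\ref{lemm:TrivialBitorsor:2} is essentially identical to the paper's argument: automorphisms of the trivial right $G$-torsor $G$ are left translations by elements of $G(F)$, and the bitorsor compatibility $\del\circ\varphi=\del$ forces $\del(g)=1$.

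For part~\ref{lemm:TrivialBitorsor:1} you take a slightly different route from the paper. You pass through the cocycle description of Proposition~\ref{prop:BitorsorH0}, translating ``$(P,f)$ trivial'' into the coboundary relation \eqref{eq:CrossModuleCoboundary} and then reading off the existence of an $F$-point of $f^{-1}(1)$. The paper instead argues directly: the trivial bitorsor $(G,\del)$ visibly has $1\in\del^{-1}(1)(F)$, and conversely, given $p\in f^{-1}(1)(F)$, the map $\varphi:G\to P$, $g\mapsto p\ast g$, is an $F$-defined $G$-equivariant isomorphism satisfying $f(\varphi(g))=f(p)\del(g)=\del(g)$, hence an isomorphism of bitorsors. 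The paper's approach is more elementary in that it never invokes Proposition~\ref{prop:BitorsorH0} or any cocycle computation; your approach has the virtue of making explicit why the lemma is really a restatement of the ``neutral class'' condition in $\Hbb^0(F,\Cscr)$. Both are perfectly valid, and the difference is one of taste rather than substance.
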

\begin{proof}
    Clearly, the fibre $\del^{-1}(1_H)$ contains the $F$-rational point $1_G \in G(F)$. Conversely, if $p \in P(F)$ such that $f(p) = 1_H$, then the $G$-equivariant isomorphism 
        \begin{equation*}
            \varphi: G \to P, \quad g \mapsto p \ast g
        \end{equation*}
    satisfies $f(\varphi(g)) = f(p \ast g) = f(p) \del(g) = \del(g)$ for all $g \in G(\bar{F})$, that is, $f \circ \varphi = \del$. Thus, $\varphi$ yields an isomorphism of $F$-bitorsors $(G,\del) \xrightarrow{\cong} (P,f)$. This proves \ref{lemm:TrivialBitorsor:1}.

    It is well-known that any automorphism of the trivial torsor $G$ is given by left translation by a unique element $g \in G(F)$. This is an automorphism of $(G,\del)$ if and only if $\del(gh) = \del(h)$ for any $h \in G(\bar{F})$, or $\del(g) = 1_H$, that is, $g \in (\Ker \del)(F)$. This proves \ref{lemm:TrivialBitorsor:2}.
\end{proof}

\begin{remk} \label{remk:FromTorsorToBitorsor}
    The inclusion $\Ker \del \hookrightarrow G$ induces a natural morphism 
        \begin{equation*}
            \iota: (\Ker \del)[1] = [\Ker \del \to 1] \to \Cscr
        \end{equation*}
    of crossed modules over $F$. It yields a group homomorphism
        \begin{equation*}
            \iota_\ast: \H^1(F,\Ker \del) \to \Hbb^0(F,\Cscr).
        \end{equation*}
    Algebrically, it takes the class of a $1$-cocycle $\alpha: \Gal_F \to \Ker \del$ to that of the $0$-cycle $(\alpha,1_H)$. Geometrically, it takes the class of an $F$-torsor $P$ under $\Ker \partial$ to the class of the $F$-bitorsor $(P \times_F^{\Ker \del} G, f)$ under $\Cscr$. Here, $P \times_F^{\Ker \del} G$ denotes the usual contracted product construction, on which $G$ acts on the second factor via right translation, and $f$ is induced by the morphism $\del: G \to H$ on the second factor. In particular, one easily checks, using Lemma \ref{lemm:TrivialBitorsor}, that $\iota_\ast$ has trivial kernel, hence injective. Nevertheless, it need not be surjective. Indeed, let $F$ be an algebraically closed field, $H$ a reductive but not semisimple connected linear algebraic $F$-group, and $G$ the universal cover of the semisimple group $[H,H]$. The natural composite
        \begin{equation*}
            G \twoheadrightarrow [H,H] \hookrightarrow H,
        \end{equation*}
    denote by $\del$, is then nonsurjective. Note that $H$ and $G$ have the same adjoint group, so letting $H$ act on $G$ by inner automorphisms yield a crossed module $\Cscr = [G \xrightarrow{\del} H]$. With these choices, we have $\H^1(F,\Ker \del) = 0$ (because $F$ is algebraically closed), but $\Hbb^0(F,\Cscr) = \Coker \del$ is the group of $F$-points of the nontrivial torus $H/[H,H]$. Geometrically, the nonsurjectivity of $\iota_\ast$ corresponds to the fact that if $(P,f)$ is an $F$-bitorsor under $\Cscr$, then the fibre $f^{-1}(1_H)$ need not be an $F$-torsor under $\Ker\del$. In fact, it is so if and only if the fibre  is {\em nonempty}, that is, contains an $\bar{F}$-point.
\end{remk}

\subsection{The six-term exact sequence}

In this subsection, let $\Fcal = ((F_i)_{i \in \Vscr}, (F_k)_{k \in \Escr})$ be a factorization inverse system of fields, whose inverse limit is a field $F$. When patching holds for finite-dimensional vector spaces over $\Fcal$, we propose the following analogue of \eqref{eq:SixTermMayerVietorisGroup} for crossed modules (which also slightly generalizes the Mayer--Vietoris sequence for bitorsors in the classical sense, established by Haase in \cite[Theorem 6.2.5]{Haase2018Thesis}).

\begin{thm} \label{thm:MayerVietorisCrossedModule}
    Let $\Cscr = [G \xrightarrow{\del} H]$ be a crossed module over $F$, with $G$ and $H$ linear. If patching holds for finite-dimensional vector spaces over $\Fcal$, then we have a functorial exact sequence
        \begin{equation} \label{eq:MayerVietorisCrossedModule}
        \xymatrix{
            1 \ar[r] & \Hbb^{-1}(F,\Cscr) \ar[r] & \prod_{i \in \Vscr} \Hbb^{-1}(F_i,\Cscr)  \ar[r] & \prod_{k \in \Escr} \Hbb^{-1}(F_k,\Cscr)  \ar[lld]_{\delta} \\
            & \Hbb^0(F,\Cscr) \ar[r] & \prod_{i \in \Vscr} \Hbb^0(F_i,\Cscr) \ar[r] & \prod_{k \in \Escr} \Hbb^0(F_k,\Cscr).
        }
    \end{equation}
    of pointed sets, where all arrows, except possibly the last one, are group homomorphisms.
\end{thm}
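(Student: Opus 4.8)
\emph{The plan} is to mirror the Harbater--Hartmann--Krashen proof of the group sequence \eqref{eq:SixTermMayerVietorisGroup}, with bitorsors under $\Cscr$ playing the role of torsors under $G$. Throughout I would use Proposition \ref{prop:BitorsorH0} to identify $\Hbb^0(E,\Cscr)=\Bit_E(\Cscr)$ (base point: the class of $(G_E,\del)$) and write $Z:=\Ker\del$, so that $\Hbb^{-1}(E,\Cscr)=Z(E)$; recall that $Z$ is central in $G$, hence is a commutative affine $F$-group. The left-hand segment $1\to\Hbb^{-1}(F,\Cscr)\to\prod_{i\in\Vscr}\Hbb^{-1}(F_i,\Cscr)\to\prod_{k\in\Escr}\Hbb^{-1}(F_k,\Cscr)$ — the last map being the difference of the two families of restriction maps — needs no patching hypothesis: since $Z$ is affine and $F$ is, by assumption, the equalizer of $\prod_iF_i\rightrightarrows\prod_kF_k$ in commutative rings, the functor $Z(-)=\Hom_{F\alg}(\Ocal(Z),-)$ carries this equalizer to $Z(F)=\mathrm{eq}\bigl(\prod_iZ(F_i)\rightrightarrows\prod_kZ(F_k)\bigr)$, which is exactly the asserted exactness of abelian groups.

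\emph{The crux} is a bitorsor analogue of \cite[Theorem 2.3]{HHK2015LocalGlobal}, which I would isolate as a lemma: assuming patching for finite-dimensional vector spaces over $\Fcal$, every $\Cscr$-bitorsor patching problem — a family of $F_i$-bitorsors $(P_i,f_i)$ under $\Cscr$ together with isomorphisms $\mu_k\colon(P_{l(k)},f_{l(k)})_{F_k}\xrightarrow{\cong}(P_{r(k)},f_{r(k)})_{F_k}$ — has a solution $(P,f)$ over $F$, unique up to isomorphism. I would first invoke \cite[Theorem 2.3]{HHK2015LocalGlobal} to solve the underlying $G$-torsor problem, getting $P$ with isomorphisms $\nu_i\colon P_{F_i}\xrightarrow{\cong}P_i$ compatible with the $\mu_k$. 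As $P$ is an affine $F$-scheme of finite type (a torsor under an affine group over a field) and each $\mu_k$ is an isomorphism of bitorsors, the morphisms $f_i\circ\nu_i\colon P_{F_i}\to H$ agree after restriction to the $F_k$, so — by the standard consequence of vector-space patching that morphisms between affine $F$-schemes of finite type patch — they glue to a unique $f\colon P\to H$; and the defining identity $f(p\ast g)=f(p)\del(g)$, an equality of two morphisms $P\times_FG\to H$ valid over every $F_i$, then holds over $F$ by the same uniqueness. Uniqueness of $(P,f)$ follows from that of $P$ plus uniqueness of the patched morphism. I expect this step — making precise the gluing of the non-homomorphic datum $f$ and checking that the bitorsor relation descends to $F$ — to be the main obstacle.

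\emph{Granting this}, I would define the connecting map by sending $(z_k)_k\in\prod_{k\in\Escr}\Hbb^{-1}(F_k,\Cscr)$ to the class of the solution of the bitorsor patching problem $\bigl(\{(G_{F_i},\del)\}_i,\{L_{z_k}\}_k\bigr)$, where $L_{z_k}$ is left translation by $z_k$, an automorphism of $(G_{F_k},\del)$ because $\del(z_k)=1$; it is a group homomorphism by compatibility of the contracted product of bitorsors (Remark \ref{remk:BitorsorGroupLaw}) with patching, or by a direct cocycle computation. Exactness at $\prod_k\Hbb^{-1}(F_k,\Cscr)$: if $(z_k)$ comes from a family $(z_i)$, then $(G_F,\del)$ with trivializations $L_{z_i}$ solves the defining problem, so $\delta((z_k))$ is trivial; conversely, if $\delta((z_k))$ is the base point then $(G_F,\del)$ is a solution, giving isomorphisms $\nu_i\in\Aut(G_{F_i},\del)=\Hbb^{-1}(F_i,\Cscr)$ (Lemma \ref{lemm:TrivialBitorsor}) compatible with the $L_{z_k}$, and comparing them over the $F_k$ exhibits $(z_k)$ as the required difference. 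Exactness at $\Hbb^0(F,\Cscr)$: $\delta((z_k))$ is trivial over each $F_i$ by construction; conversely, if $[(P,f)]$ is trivial over every $F_i$, choose isomorphisms $\nu_i\colon(G_{F_i},\del)\xrightarrow{\cong}(P,f)_{F_i}$, let $z_k\in\Hbb^{-1}(F_k,\Cscr)$ correspond via Lemma \ref{lemm:TrivialBitorsor} to the automorphism of $(G_{F_k},\del)$ comparing $\nu_{l(k)}$ and $\nu_{r(k)}$ over $F_k$, and observe that $(P,f)$ then solves the problem defining $\delta((z_k))$, so $[(P,f)]=\delta((z_k))$ by uniqueness. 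Finally, exactness at $\prod_i\Hbb^0(F_i,\Cscr)$ — a family of classes agreeing after restriction to all $F_k$ lies in the image of $\Hbb^0(F,\Cscr)$ — is, after choosing gluing isomorphisms, precisely the existence part of bitorsor patching, the reverse containment being transitivity of restriction. Functoriality in $\Cscr$ is clear, restriction and bitorsor patching being functorial; and since the $\Hbb^{-1}$'s are abelian groups, the $\Hbb^0$'s groups, restriction maps homomorphisms, and $\delta$ a homomorphism, all arrows but the last are group homomorphisms.
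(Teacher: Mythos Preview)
Your proposal is correct and follows essentially the same route as the paper: the top row via the affine equalizer property of $Z=\Ker\del$, a separate bitorsor-patching lemma built on \cite[Theorem 2.3]{HHK2015LocalGlobal} plus patching of morphisms into the affine scheme $H$, and then the construction of $\delta$ via left translations together with the exactness verifications using Lemma \ref{lemm:TrivialBitorsor}. One small inaccuracy worth noting: the ``patching of morphisms into an affine target'' step (your gluing of the $f_i\circ\nu_i$) is not a consequence of vector-space patching but only of $F$ being the inverse limit of the $F_i$'s and $F_k$'s --- the paper isolates this as Lemma \ref{lemm:PatchingMorphism}, whose proof uses just flatness over $F$ and left-exactness of $\Hom_F(A,-)$; the vector-space patching hypothesis enters only through the torsor-patching step.
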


The claim in Theorem \ref{thm:MayerVietorisCrossedModule} for the top row is easy. Indeed, since $\Ker \del$ is a linear ($=$ affine) algebraic group, we have an exact sequence
    \begin{equation*}
        1 \to (\Ker \del)(F) \to \prod_{i \in \Vscr}(\Ker \del)(F_i) \to \prod_{k \in \Escr}(\Ker \del)(F_k),
    \end{equation*}
where the last arrow is given by $(g_i)_{i \in \Vscr} \mapsto (g_{r(k)}^{-1} g_{l(k)})_{k \in \Escr}$. Note that the latter is a homomorphism because $\Ker \del$ is commutative (it is central in $G$). This establishes the top row of \eqref{eq:MayerVietorisCrossedModule}, by definition of $\Hbb^{-1}$. As for the bottom row, in view of Proposition \ref{prop:BitorsorH0}, we are to establish an exact sequence
    \begin{equation} \label{eq:PatchingBitorsor}
        \Bit_F(\Cscr) \to \prod_{i \in \Vscr} \Bit_{F_i}(\Cscr) \rightrightarrows \prod_{k \in \Escr} \Bit_{F_k}(\Cscr),
    \end{equation}
that is, to show that {\em patching holds for $\Cscr$-bitorsors over $\Fcal$} ({\em cf.} \cite[Theorem 6.2.2]{Haase2018Thesis}). To this end, it is convenient to introduce the following notion.

\begin{defn} \label{defn:PatchingProblemBitorsor}
    Let $\Cscr = [G \xrightarrow{\del} H]$ be a crossed module over $F$. The groupoid $\BPP_\Fcal(\Cscr)$ of {\em $\Cscr$-bitorsor patching problems over $\Fcal$} is the groupoid where

    \begin{enumerate}
        \item an object is a collection $((P_i,f_i)_{i \in \Vscr}, (\mu_k)_{k \in \Escr})$, where $(P_i,f_i)$ is an $F_i$-bitorsor under $\Cscr$ for each $i \in \Vscr$, and $\mu_k: (P_{l(k),F_k}, f_{l(k),F_k}) \xrightarrow{\cong} (P_{r(k),F_k}, f_{r(k),F_k})$ is an isomorphism of $F_k$-bitorsors under $\Cscr$,

        \item an isomorphism $((P_i,f_i)_{i \in \Vscr}, (\mu_k)_{k \in \Escr}) \to ((P'_i,f'_i)_{i \in \Vscr}, (\mu'_k)_{k \in \Escr})$ is a collection $(\nu_i)_{i \in \Vscr}$, where $\nu_i: (P_i,f_i) \xrightarrow{\cong} (P'_i,f'_i)$ is an isomorphism of $F_i$-bitorsors under $\Cscr$ for each $i \in \Vscr$, such that $\mu_k' \circ \nu_{l(k),F_k} = \nu_{r(k),F_k} \circ \mu_k$ for any $k \in \Escr$,

        \item composition of isomorphisms and identities are defined component-wise.
    \end{enumerate}
\end{defn}

\begin{prop} \label{prop:PatchingBitorsor}
    Let $\Cscr = [G \xrightarrow{\del} H]$ be a crossed module over $F$, with $G$ and $H$ linear. If patching holds for finite-dimensional vector spaces over $\Fcal$, then the base change functor
    \begin{equation*}
        \Phi: \BIT_F(\Cscr) \to \BPP_\Fcal(\Cscr)
    \end{equation*}
    given by $\Phi(P,f) = ((P_{F_i}, f_{F_i})_{i \in \Vscr}, (\id_{P_{F_k}})_{k \in \Escr})$ on objects, and $\Phi(\mu) = (\mu_{F_i})_{i \in \Vscr}$ on (iso-)morphisms, is an equivalence of categories. In particular, every patching problem $((P_i, f_i)_{i \in \Vscr}, (\mu_k)_{k \in \Escr}) \in \BIT_F(\Cscr)$ has a unique (up to isomorphism) {\em solution}, that is, an $F$-bitorsor $(P,f)$ under $\Cscr$ such that there exists an isomorphism
        \begin{equation*}
            (\nu_i)_{i \in \Vscr}: ((P_{F_i}, f_{F_i})_{i \in \Vscr}, (\id_{P_{F_k}})_{k \in \Escr}) \xrightarrow{\cong} ((P_i, f_i)_{i \in \Vscr}, (\mu_k)_{k \in \Escr})
        \end{equation*}
    in $\BPP_\Fcal(\Cscr)$.
\end{prop}
\begin{proof}
    We need the following lemma on ``patching morphisms to an affine scheme''.
\begin{lemm} \label{lemm:PatchingMorphism}
    Let $X = \Spec(A)$ be an affine $F$-scheme and $S$ an $F$-scheme. Then 
        \begin{equation*}
            1 \to X(S) \to \prod_{i \in \Vscr} X(S_{F_i}) \rightrightarrows \prod_{k \in \Escr} X(S_{F_k})
        \end{equation*}
    is an equalizer diagram.
\end{lemm}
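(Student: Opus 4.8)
The plan is to observe that all the functors in play are Zariski sheaves on $(\mathrm{Sch}/F)$, reduce to the case where $S$ is affine, and there use the adjunction between $\Spec$ and global sections to convert the assertion into the defining equalizer of $\Fcal$.

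First, $Z \mapsto X(Z)$ is a Zariski sheaf on $(\mathrm{Sch}/F)$ (morphisms to a fixed scheme glue), and so are $Z \mapsto X(Z_{F_i})$ and $Z \mapsto X(Z_{F_k})$, since base change along $\Spec F_i \to \Spec F$ (resp.\ $\Spec F_k \to \Spec F$) takes Zariski covers to Zariski covers and commutes with fibre products; hence the equalizer $\mathcal{G}$ of the two maps $\prod_{i \in \Vscr} X((-)_{F_i}) \rightrightarrows \prod_{k \in \Escr} X((-)_{F_k})$ is again a Zariski sheaf, and the canonical map $X(-) \to \mathcal{G}$ is a morphism of Zariski sheaves. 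It therefore suffices to check that $X(S) \to \mathcal{G}(S)$ is bijective for $S$ affine: for an arbitrary $S$ with affine open cover $\{S_\alpha\}$, the sheaf axiom (applied using $(S_\alpha)_{F_i} \cap (S_\beta)_{F_i} = (S_\alpha \cap S_\beta)_{F_i}$, and likewise over $F_k$) expresses both $X(S)$ and $\mathcal{G}(S)$ as equalizers indexed by the $S_\alpha$ and the overlaps $S_\alpha \cap S_\beta$; the $S_\alpha$ are affine, and the $S_\alpha \cap S_\beta$, being open subschemes of affine schemes, are separated, so they admit affine covers with affine overlaps and a further application of the sheaf axiom reduces the $S_\alpha \cap S_\beta$ to the affine case as well.

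For $S = \Spec B$ affine we have $S_{F_i} = \Spec(B \otimes_F F_i)$ and $S_{F_k} = \Spec(B \otimes_F F_k)$, so that $X(S) = \Hom_{F\alg}(A, B)$, $X(S_{F_i}) = \Hom_{F\alg}(A, B \otimes_F F_i)$ and $X(S_{F_k}) = \Hom_{F\alg}(A, B \otimes_F F_k)$. As $\Hom_{F\alg}(A, -)$ preserves all limits (products and equalizers of $F$-algebras being computed on underlying sets), the claim for affine $S$ amounts to
\[
B \;=\; \operatorname{eq}\!\Bigl( \prod_{i \in \Vscr} (B \otimes_F F_i) \;\rightrightarrows\; \prod_{k \in \Escr} (B \otimes_F F_k) \Bigr),
\]
where the two maps are induced by the field inclusions $F_{l(k)} \hookrightarrow F_k \hookleftarrow F_{r(k)}$. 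This in turn follows from the defining equalizer $1 \to F \to \prod_{i} F_i \rightrightarrows \prod_{k} F_k$ of $\Fcal$: since $F$ is a field, $B$ is a free $F$-module, and choosing a basis $(b_\lambda)_\lambda$ identifies the displayed diagram with the ``coefficientwise'' copy of the defining equalizer — one uses here that each $F \hookrightarrow F_i$ is injective, so that a family in $\prod_i (B \otimes_F F_i)$ is, for every $i$, finitely supported in the $b_\lambda$ exactly when the associated family of $F$-coefficients is globally finitely supported. (When the reduction graph $\Gamma$ is finite this last step is simply the exactness of $-\otimes_F B$ together with distributivity of the tensor product over the products.)

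I do not expect a genuine obstacle here: the whole content lies in the hypothesis that $F$ is the inverse limit of $\Fcal$, and the remainder is formal descent through the affine adjunction. The one point calling for a little care is the passage from affine $S$ to arbitrary $S$ — one must make sure the Zariski-sheaf descent really closes up although the overlaps $S_\alpha \cap S_\beta$ need not be affine — and this is settled by the observation that open subschemes of affine schemes are separated, so that a single extra iteration suffices.
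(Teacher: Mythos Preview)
Your argument is correct. The paper takes a shorter route: rather than reducing to affine $S$ via the Zariski sheaf axiom, it exploits the fact that for $X=\Spec(A)$ affine one already has $X(S)=\Hom_{F\alg}(A,B)$ with $B:=\Gamma(S,\Ocal_S)$ for \emph{any} scheme $S$, and likewise for $S_{F_i}$ and $S_{F_k}$; it then applies the left-exact functors $B\otimes_F-$ and $\Hom_F(A,-)$ to the defining equalizer of $\Fcal$, and finishes by observing (using $\Vscr\neq\varnothing$) that an $F$-linear map $A\to B$ which becomes an algebra map after one injective base change is already an algebra map. Your approach instead descends along a Zariski cover to affine $S$ and then argues via a basis of $B$; this is a bit longer but pays off in robustness: it makes no implicit use of the identification $\Gamma(S_{F_i},\Ocal)\cong B\otimes_F F_i$ (which the paper is tacitly invoking and which requires, say, $S$ qcqs), and your finite-support observation handles the case where $\Vscr$ or $\Escr$ is infinite, whereas the paper's ``$B\otimes_F-$ commutes with the products'' step is only literally valid for finite index sets. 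In the paper's applications $S$ is always affine of finite type and the reduction graph is finite, so both proofs cover the cases actually used.
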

\begin{proof}
    The claim in the lemma is equivalent to saying that
        \begin{equation} \label{eq:PatchingAlgebra}
            0 \to \Hom_{F\alg}(A, B) \to \prod_{i \in \Vscr} \Hom_{F\alg}(A, B \otimes_F F_i) \rightrightarrows \prod_{k \in \Escr} \Hom_{F\alg}(A, B \otimes_F F_k),
        \end{equation}
    where $B:=\mathcal{O}_S(S)$, is an equalizer diagram. To see this, observe that, since 
        \begin{equation*}
            0 \to F \to \prod_{i \in \Vscr} F_i \rightrightarrows \prod_{k \in \Escr} F_k
        \end{equation*}
    is an equalizer diagram in the abelian category of $F$-vector spaces, successively applying the left exact functors $B \otimes_F -$ and $\Hom_F(A,-)$ yields an equalizer diagram
        \begin{equation*}
            0 \to \Hom_{F}(A, B) \to \prod_{i \in \Vscr} \Hom_{F}(A, B \otimes_F F_i) \rightrightarrows \prod_{k \in \Escr} \Hom_{F}(A, B \otimes_F F_k).
        \end{equation*}
    This already shows the injectivity of the first nontrivial arrow ({\em i.e.} exactness at the first nontrivial term) in \eqref{eq:PatchingAlgebra}. As for exactness at the second nontrivial term, assume that $\varphi: A \to B$ is an $F$-linear map such that the composite $A \xrightarrow{\varphi} B \hookrightarrow B \otimes_F F_i$ is an $F$-algebra homomorphism for all $i \in \Vscr$. Since $\Vscr$ is nonempty, we can take any $i \in \Vscr$, and the injectivity of $B \hookrightarrow B \otimes_F F_i$ shows that $\varphi$ is itself a homomorphism of $F$-algebras. The proof concludes.
\end{proof}
    
    Back to the proof of Proposition \ref{prop:PatchingBitorsor}. Let $(P,f)$ and $(P',f')$ be $F$-bitorsors under $\Cscr$. Assume that $\nu,\nu': (P,f) \xrightarrow{\cong} (P',f')$ are isomorphisms of $F$-bitorsors under $\Cscr$ such that $\nu_{F_i} = \nu_{F_i'}$ for all $i \in \Vscr$. Since $P'$ is a torsor under $G$, it is an affine $F$-scheme, so Lemma \ref{lemm:PatchingMorphism} ensures that $\nu = \nu'$. Therefore, $\Phi$ is faithful. 
    
    Furthermore, if $\nu_i: (P_{F_i},f_{F_i}) \xrightarrow{\cong} (P_{F_i}',f_{F_i}')$ is an isomorphism of $F_i$-bitorsors under $\Cscr$ for each $i \in \Vscr$, such that $\nu_{l(k),F_k} = \nu_{r(k),F_k}$ for all $k \in \Escr$, then (again by Lemma \ref{lemm:PatchingMorphism}) there is a morphism $\nu: P \to P'$ (of $F$-varieties) such that $\nu_{F_i} = \nu_i$ for all $i \in \Vscr$. Take any $i \in \Vscr$ and fix a field inclusion $\bar{F} \hookrightarrow \bar{F}_i$  extending $F \hookrightarrow F_i$. Then $P(\bar{F})$ can be regarded as a subset of $P(\bar{F}_i)$. Since $\nu_{F_i} = \nu_i$ is an isomorphism of $F_i$-torsors under $G$ and $f'_{F_i} \circ \nu_{i} = f_{F_i}$, one sees that $\nu$ is an isomorphism of $F$-torsors under $G$ and $f' \circ \nu = f$ by checking these conditions on $\bar{F}$-points. Therefore, $\Phi$ is full.    

    Let us now show that $\Phi$ is essentially surjective. Let $((P_i, f_i)_{i \in \Vscr}, (\mu_k)_{k \in \Escr})$  be an object of $\BPP_\Fcal(\Cscr)$. Since $G$ is linear, \cite[Theorem 2.3]{HHK2015LocalGlobal} ensures the existence of an $F$-torsor $P$ under $G$ together with isomorphisms
        \begin{equation*}
            \nu_i: P_{F_i} \xrightarrow{\cong} P_{i}
        \end{equation*}
    of $F_i$-torsors under $G$, for $i \in \Vscr$, such that $\mu_k \circ \nu_{l(k),F_k} = \nu_{r(k),F_k}$ for any $k \in \Escr$. It remains to construct a morphism $f: P \to H$ of $F$-varieties such that $(P,f)$ is an $F$-bitorsor under $\Cscr$ and $f_{i} \circ \nu_i = f_{F_i}$ for all $i \in \Vscr$.
    Since $\mu_k$ is furthermore an isomorphism of $F_k$-bitorsors, one has
        \begin{equation*}
            f_{r(k),F_k} \circ \nu_{r(k),F_k} = f_{r(k),F_k} \circ \mu_k \circ \nu_{l(k),F_k} = f_{l(k),F_k} \circ \nu_{l(k),F_k}.
        \end{equation*}
    By Lemma \ref{lemm:PatchingMorphism}, the morphisms $f_i \circ \nu_i$ glue into a morphism $f: P \to H$. Take any $i \in \Vscr$ and let $\bar{F} \hookrightarrow \bar{F}_i$ be a field inclusion extending $F \hookrightarrow F_i$. Then, for any $p \in P(\bar{F})$ and $g \in G(\bar{F})$, one has
        \begin{equation*}
            f(p \ast g) = f_i(\nu_i(p \ast g)) = f_i(\nu_i(p) \ast g) = f_i(\nu_i(p)) \del(g) = f(p) \del(g),
        \end{equation*}
    where $\ast$ denotes the right action of $G$ (resp. $G_{F_i}$) on $P$ (resp. $P_i$). Therefore, $(P,f)$ is indeed a bitorsor under $\Cscr$, concluding the proof of the proposition.
\end{proof}

\begin{proof} [Proof of Theorem \ref{thm:MayerVietorisCrossedModule}]
    We have already established the first row of \eqref{eq:MayerVietorisCrossedModule}. The second row is obtained from \eqref{eq:PatchingBitorsor} (which is established in Proposition \ref{prop:PatchingBitorsor}) by taking the last arrow to be $(x_i)_{i \in \Vscr} \mapsto (x_{r(k)}^{-1}|_{F_k} x_{l(k)}|_{F_k})_{k \in \Escr}$ (this need not be a group homomorphism if $G$ or $H$ is not commutative). Let us now construct the ``connecting map''
        \begin{equation*}
            \delta: \prod_{k \in \Escr} (\Ker \del)(F_k) \to \Bit_F(\Cscr)
        \end{equation*}
    as follows. Given $(g_k)_{k \in \Escr} \in \prod_{k \in \Escr} (\Ker \del)(F_k)$, the left translation by $g_k$ defines an automorphism of the trivial $F_k$-bitorsor $(G_{F_k}, \del_{F_k})$ under $\Cscr$ (see Lemma \ref{lemm:TrivialBitorsor}\ref{lemm:TrivialBitorsor:2}). By Proposition \ref{prop:PatchingBitorsor}, there is a unique (up to isomorphism) $F$-bitorsor $(P,f)$ under $\Cscr$ together with isomorphisms $\nu_i: (P_{F_i}, f_{F_i}) \to (G_{F_i},\del_{F_i})$, for $i \in \Vscr$, such that 
        \begin{equation} \label{eq:MayerVietorisCrossedModule:1}
            g_k \nu_{l(k)}(p) = \nu_{r(k)}(p)
        \end{equation}
    for any $k \in \Escr$ and $p \in P(\bar{F}_k)$. We define $\delta((g_k)_{k \in \Escr}):=[P,f]$.

    Let us proceed to show that $\delta$ is a group homomorphism. Let $(g'_k)_{k \in \Escr} \in \prod_{k \in \Escr} (\Ker \del)(F_k)$ be a second family. From this and the product family $(g_k g'_k)_{k \in \Escr} \in \prod_{k \in \Escr} (\Ker \del)(F_k)$, one constructs respective $F$-bitorsors $(P',f')$ and $(P'',f'')$ under $\Cscr$, together with isomorphisms $\nu_i': (P'_{F_i}, f_{F_i}') \to (G_{F_i},\del_{F_i})$ and $\nu_i'': (P''_{F_i}, f_{F_i}'') \to (G_{F_i},\del_{F_i})$ with the properties that
    \begin{equation} \label{eq:MayerVietorisCrossedModule:2}
        g_k' \nu'_{l(k)}(p') = \nu'_{r(k)}(p') \quad \text{and} \quad g_k g_k' \nu''_{l(k)}(p'') = \nu''_{r(k)}(p'')
    \end{equation}
    for any $k \in \Escr$, $p' \in P'(\bar{F}_k)$, and $p'' \in P''(\bar{F}_k)$. Let $\mu: G \times_F G \to G$ denote the multiplication morphism. For $i \in \Vscr$, define an $F_i$-morphism $\pi_i: P'_{F_i} \times_{F_i} P_{F_i} \to P''_{F_i}$ via the commutative square
        \begin{equation} \label{eq:MayerVietorisCrossedModule:3}
            \xymatrix{
                P'_{F_i} \times_{F_i} P_{F_i} \ar[r]^-{\pi_i} \ar[d]^{\nu_i' \times_{F_i} \nu_i} & P''_{F_i} \ar[d]^{\nu_i''} \\
                G_{F_i} \times_{F_i} G_{F_i} \ar[r]^-{\mu_{F_i}} & G_{F_i}
            }
        \end{equation}
    (this is possible since the vertical arrows in \eqref{eq:MayerVietorisCrossedModule:3} are isomorphisms). We claim that $\pi_{l(k),F_k} = \pi_{r(k),F_k}$ for any $k \in \Escr$. Indeed, to simplify notations, put $l:=l(k)$ and $r:=r(k)$. For any $p' \in P'(\bar{F}_k)$ and $p \in P(\bar{F}_k)$, in view of \eqref{eq:MayerVietorisCrossedModule:1}, \eqref{eq:MayerVietorisCrossedModule:2}, and \eqref{eq:MayerVietorisCrossedModule:3}, one has
        \begin{equation*}
            \nu''_r(\pi_r(p',p)) = \nu'_r(p') \nu_r(p) = g_k g'_k \nu'_l(p') \nu_l (p) = g_kg_k' \nu_l''(\pi_l(p',p)) = \nu''_r(\pi_l(p',p))
        \end{equation*}
    (recall that elements $g_k, g'_k \in \Ker \del$ are central in $G$). Since $\nu''_r$ is an isomorphism, it follows that $\pi_{l,F_k} = \pi_{r,F_k}$ as claimed. Since $P$ is affine, Lemma \ref{lemm:PatchingMorphism} tells us that there is an $F$-morphism $\pi: P' \times_F P \to P''$ such that $\pi_{F_i} = \pi_i$ for any $i \in \Vscr$.

    We wish to show that $[P'',f''] = [P,f][P',f']$ in $\Bit_F(\Cscr)$. To this end, fix a vertex $i \in \Vscr$ together with a field inclusion $\bar{F} \hookrightarrow \bar{F}_i$ extending $F \hookrightarrow F_i$. Let $p' \in P'(\bar{F})$, $p \in P(\bar{F})$, and $g \in G(\bar{F})$. Since $f_{F_i} = \del_{F_i} \circ \nu_i$ by construction, one has
        \begin{equation*}
            f(p)^{-1} \cdot g = \del(\nu_i(p))^{-1} \cdot g = \nu_i(p)^{-1} g \nu_i(p)
        \end{equation*}
    by virtue of \eqref{eq:CrossModule}. It follows from \eqref{eq:MayerVietorisCrossedModule:3} that
        \begin{align*}
            \nu_i''(\pi(p' \ast g^{-1}, p \ast (f(p)^{-1} \cdot g))) & = \nu'_i(p' \ast g^{-1}) \nu_i(p \ast (f(p)^{-1} \cdot g)) \\
            & = \nu'_i(p') g^{-1} \nu_i(p) (f(p)^{-1} \cdot g) \\
            & = \nu'_i(p') \nu_i(p) \\
            & = \nu''_i(\pi(p',p)).
        \end{align*}
    Since $\nu''_i$ is an isomorphism, it follows that
        \begin{equation*}
            \pi(p' \ast g^{-1}, p \ast (f(p)^{-1} \cdot g)) = \pi(p',p).
        \end{equation*}
    Using the notations from Remarks \ref{remk:BitorsorGroupLaw}, this implies that $\pi$ induces a morphism $\varphi: P' \times^G_F P \to P''$. In addition, this is a morphism of right $G$-torsors, because
        \begin{equation*}
            \nu_i''(\pi(p',p \ast g)) = \nu_i'(p') \nu_i(p \ast g) = \nu_i'(p') \nu_i(p)g = \nu_i''(\pi(p',p))g = \nu_i''(\pi(p',p) \ast g)
        \end{equation*}
    by \eqref{eq:MayerVietorisCrossedModule:3}, or $\pi(p',p\ast g) = \pi(p',p) \ast g$ because $\nu_i''$ is an isomorphism. It follows that $\varphi$ is a morphism of right $F$-torsors under $G$, hence, as any morphism of torsors, is an isomorphism. 
    
    Now, one has $f_{F_i} = \del_{F_i} \circ \nu_i$, $f_{F_i}' = \del_{F_i} \circ \nu'_i$, and $f''_{F_i} = \del_{F_i} \circ \nu_i''$. Therefore
        \begin{equation*}
            f''(\pi(p',p)) = \del(\nu''_i(\pi(p',p))) = \del(\nu'_i(p') \nu_i(p)) = f'(p')f(p)
        \end{equation*}
    by virtue of \eqref{eq:MayerVietorisCrossedModule:3}. Following the discussion from Remarks \ref{remk:BitorsorGroupLaw}, one sees that the morphism $f'' \circ \pi: P' \times_F P \to H$ induces a morphism $\tilde{f}: P' \times^G_F P \to H$ with the property that $(P' \times^G_F P ,\tilde{f})$ is an $F$-bitorsor under $\Cscr$ whose class in $\Bit_F(\Cscr)$ is precisely $[P,f][P',f']$. Furthermore, one has $f'' \circ \varphi = \tilde{f}$ (because both sides yield $f'' \circ \pi$ when composed with the canonical epimorphism $P' \times_F P \to P' \times_F^G P$). Equivalently, there holds
        \begin{equation*}
            [P,f][P',f'] = [P' \times^G_F P , \tilde{f}] = [P'',f'']
        \end{equation*}
    in $\Bit_F(\Cscr)$. This shows that $\delta: \prod_{k \in \Escr}(\Ker \del)(F_k) \to \Bit_F(\Cscr)$ is effectively a group homomorphism.

    We now show the exactness of \eqref{eq:MayerVietorisCrossedModule} at the term $\prod_{k \in \Escr}(\Ker \del)(F_k)$. Let $(g'_i)_{i \in \Vscr} \in \prod_{i \in \Vscr}(\Ker \del) (F_i)$. We proceed to show that $\delta((g'_{r(k)})^{-1} g'_{l(r)})$ is the neutral element of $\Bit_F(\Cscr)$, {\em i.e.}, the class $[G,\del]$. By construction, $\delta((g'_{r(k)})^{-1} g'_{l(r)})$ is represented by an $F$-bitorsor $(P,f)$ under $\Cscr$ together with isomorphisms $\nu_i: (P_{F_i}, f_{F_i}) \to (G_{F_i}, \del_{F_i})$, for $i \in \Vscr$, such that 
        \begin{equation} \label{eq:MayerVietorisCrossedModule:4}
            g'_{l(k)} \nu_{l(k)}(p) = g'_{r(k)} \nu_{r(k)}(p)
        \end{equation}
    for any $k \in \Escr$ and $p \in P(\bar{F}_k)$. Now, for each $i \in \Vscr$, the left translation by $g'_i$ is an automorphism of the $F_i$-bitorsor $(G_{F_i},\del_{F_i})$ ({\em cf.} Lemma \ref{lemm:TrivialBitorsor}\ref{lemm:TrivialBitorsor:2}). Therefore, composing it with $\nu_i$ yields an isomorphism $\mu_i: (P_{F_i},f_{F_i}) \to (G_{F_i},\del_{F_i})$ with the property that $\mu_{l(k),F_k} = \mu_{r(k),F_k}$ for any $k \in \Escr$ (as follows from \eqref{eq:MayerVietorisCrossedModule:4}). By Lemma \ref{lemm:PatchingMorphism}, we obtain an isomorphism $\mu: (P,f) \to (G,\del)$ of $F$-bitorsors, so that $[P,f] = [G,\del]$ as claimed.
    
    Conversely, let $(g_k)_{k \in \Escr} \in \prod_{k \in \Escr} (\Ker \del)(F_k)$ such that $\delta((g_k)_{k \in \Escr})$ is the class $[G,\del] \in \Bit_F(\Cscr)$. Then there are automorphisms $\nu_i: (G_{F_i}, \del_{F_i}) \to (G_{F_i},\del_{F_i})$, for $i \in \Vscr$, such that
        \begin{equation} \label{eq:MayerVietorisCrossedModule:5}
            g_k \nu_{l(k)}(g) = \nu_{r(k)}(g)
        \end{equation}
    for any $k \in \Escr$ and $g \in G(\bar{F}_k)$. By Lemma \ref{lemm:TrivialBitorsor}\ref{lemm:TrivialBitorsor:2}, for each $i \in \Vscr$, the automorphism $\nu_i$ is the left translation by an element $g'_i \in (\Ker \del)(F_i)$. Substituting $g = 1_G$ in \eqref{eq:MayerVietorisCrossedModule:5} yields $g_k = g'_{r(k)} (g'_{l(k)})^{-1}$, so that $(g_k)_{k \in \Escr}$ indeed comes from $\prod_{i \in \Vscr} (\Ker \del)(F_i)$. This proves that \eqref{eq:MayerVietorisCrossedModule} is exact at the term $\prod_{k \in \Escr}(\Ker \del)(F_k)$.

    Finally, let us show that \eqref{eq:MayerVietorisCrossedModule} is exact at the term $\Bit_F(\Cscr)$. By construction, if a class $x \in \Bit_F(\Cscr)$ lies in the image of $\delta$, its restriction $x|_{F_i} \in \Bit_{F_i}(\Cscr)$ is the neutral class $[G_{F_i},\del_{F_i}]$. Conversely, let $(P,f)$ be an $F$-bitorsor under $\Cscr$ such that there are isomorphisms $\nu_i: (P_{F_i}, f_{F_i}) \to (G_{F_i},\del_{F_i})$ for $i \in \Vscr$. Then, for $k \in \Escr$, the composite
        \begin{equation*}
            \nu_{r(k),F_k} \circ \nu_{l(k),F_k}^{-1}: (G_{F_k}, \del_{F_k}) \to (G_{F_k}, \del_{F_k}).
        \end{equation*}
    is an automorphism of $F_k$-bitorsors. It is then given by left translation by an element $g_k \in (\Ker \del)(F_k)$ (Lemma \ref{lemm:TrivialBitorsor}\ref{lemm:TrivialBitorsor:2}). Therefore, equality \eqref{eq:MayerVietorisCrossedModule:1} holds true. We conclude that $[P,f]$ is the image by $\delta$ of the family $(g_k)_{k \in \Escr}$. Theorem \ref{thm:MayerVietorisCrossedModule} is finally proved.
\end{proof}

Recall that we have a natural inclusion $\H^1(F,\Ker\del) \subseteq \Hbb^0(F,\Cscr)$, {\em cf.} Remarks \ref{remk:FromTorsorToBitorsor}. The group 
    \begin{equation*}
            \Sha^0(F,\Cscr):=\Ker\left(\Hbb^0(F,\Cscr) \to \prod_{i \in \Vscr}\Hbb^0({F_i},\Cscr) \right), 
        \end{equation*}
which consists of classes of $F$-bitorsors $(P,f)$ under $\Cscr$ that become trivial when restricted to $F_i$ (for all $i \in \Vscr$) is isomorphic to the cokernel of the homomorphism
    \begin{equation*}
        \prod_{i \in \Vscr}(\Ker \del)(F_i) \to \prod_{k \in \Escr}(\Ker \del)(F_k), \quad (g_i)_{i \in \Vscr} \mapsto (g_{r(k)}^{-1}g_{l(k)})_{k \in \Escr},
    \end{equation*}
in view of Theorem \ref{thm:MayerVietorisCrossedModule}. On the other hand, by virtue of the classical Mayer--Vietoris sequence \eqref{eq:SixTermMayerVietorisGroup} for $\Ker \del$, we have that $\Sha^0(F,\Cscr)$ coincides with the subgroup
    \begin{equation*}
        \Sha^1(F,\Ker \del) := \left(\H^1(F,\Ker \del) \to \prod_{i \in \Vscr} \H^1(F_i,\Ker \del)\right).
    \end{equation*}
\iffalse 
The vanishing of the three above groups are hence equivalent and can be restated as follows.

\begin{coro} [Local--global principle for bitorsors]
    Let $\Cscr = [G \xrightarrow{\del} H]$ be a crossed module over $F$, with $G$ and $H$ linear. If patching holds for finite-dimensional vector spaces over $\Fcal$, then the following are equivalent.

    \begin{enumerate}
        \item Local--global principle holds for bitorsors under $\Cscr$ over $\Fcal$, that is, for any $F$-bitorsor $(P,f)$ under $\Cscr$, if $f^{-1}(1_H)(F_i) \neq \varnothing$ for all $i \in \Vscr$, then $f^{-1}(1_H)(F) \neq \varnothing$ ({\em cf.} Lemma \ref{lemm:TrivialBitorsor}\ref{lemm:TrivialBitorsor:1}).

        \item Simultaneous factorizations holds for $\Ker \del$ over $\Fcal$, that is, given $g'_k \in (\Ker \del)(F_k)$ for each $k \in \Escr$, there exists $(g_i)_{i \in \Vscr} \in \prod_{i \in \Vscr}(\Ker \del)(F_i)$ such that $g'_{k} = g_{r(k)}^{-1}g_{l(k)}$ for any $k \in \Escr$.

        \item Local--global principle holds for torsors under $\Ker \del$ over $\Fcal$, that is, for any $F$-torsor $P$ under $\Ker \del$, if $(\Ker \del)(F_i) \neq \varnothing$ for all $i \in \Vscr$, then $(\Ker \del)(F) \neq \varnothing$.
    \end{enumerate}
\end{coro}
\fi

\section{The Mayer--Vietoris sequence} \label{sec:MayerVietoris}

 Let $\Fcal = ((F_i)_{i \in \Vscr}, (F_k)_{k \in \Escr})$ be a factorization inverse system of fields with inverse limit a field $F$. Any short complex $\Tscr = [T_1 \xrightarrow{\del} T_2]$ of $F$-tori can be regarded as a crossed module over $F$ (see the beginning of subsection \ref{subsec:CrossedModuleH0}) with the trivial action of $T_2$ on $T_1$. The hypercohomology groups $\Hbb^i(F,T_1 \xrightarrow{\del} T_2)$, for $i = -1, 0$, coincide with the usual Galois hypercohomology groups in the commutative setting. By Theorem \ref{thm:MayerVietorisCrossedModule}, we have a six-term exact sequence
    \begin{equation} \label{eq:SixTermMayerVietorisComplexTori}
        \xymatrix{
            1 \ar[r] & \Hbb^{-1}(F,\Tscr) \ar[r] & \prod_{i \in \Vscr} \Hbb^{-1}(F_i,\Tscr)  \ar[r] & \prod_{k \in \Escr} \Hbb^{-1}(F_k,\Tscr) \ar[lld]_{\delta} \\
            & \Hbb^0(F,\Tscr) \ar[r] & \prod_{i \in \Vscr} \Hbb^0(F_i,\Tscr) \ar[r] & \prod_{k \in \Escr} \Hbb^0(F_k,\Tscr).
        }
    \end{equation}
of abelian groups. Note that the last arrow is also a group homomorphism by commutativity (see its construction from the proof of Theorem \ref{thm:MayerVietorisCrossedModule}). Our aim in this section is to extend \eqref{eq:SixTermMayerVietorisComplexTori} into a nine-term exact sequence using $\Hbb^1$.

\subsection{The case of quasitrivial tori} \label{sec:MayerVietorisQuasitrivialTori}

First, we start with the case of a one-term complex $Q[1]$, where $Q$ is a quasitrivial $F$-torus. Since $\H^1(F_k,Q) = 0$ for any $k \in \Escr$, the result we want is the following.

\begin{thm} \label{thm:PatchingH2QuasiTrivialTorus}
    If patching for finite-dimensional vector spaces holds over $\Fcal$, then we have an exact sequence
        \begin{equation*}
            0 \to \H^2(F,Q) \to \prod_{i \in \Vscr} \H^2(F_i,Q) \to \prod_{k \in \Escr} \H^2(F_k,Q)
        \end{equation*}
    of abelian groups, the last arrow being given by $(\eta_i)_{i \in \Vscr} \mapsto (\eta_{l(k)}|_{F_k} - \eta_{r(k)}|_{F_k})_{k \in \Escr}$.
\end{thm}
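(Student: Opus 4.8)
The plan is to reinterpret the statement in terms of Brauer groups and then to patch central simple algebras by regarding them as torsors under a projective linear group, so that the six-term sequence \eqref{eq:SixTermMayerVietorisGroup} for linear algebraic groups --- available precisely because patching for finite-dimensional vector spaces holds over $\Fcal$ --- applies directly.

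First I would reduce to $Q = \R_{E/F}\Gbb_{m,E}$ for a single finite separable extension $E/F$: a quasitrivial torus is a finite product of such tori, and all three terms of the sequence, as well as all transition maps, are additive in $Q$. For an integer $n \ge 1$ set $G := \R_{E/F}\PGL_{n,E}$ and $\tilde G := \R_{E/F}\GL_{n,E}$, which are linear algebraic $F$-groups. Given a field extension $F'/F$, write $E \otimes_F F' = \prod_{j} L_j$ with each $L_j/F'$ finite separable; then $Q \times_F F' = \prod_j \R_{L_j/F'}\Gbb_m$, and likewise for $G$ and $\tilde G$, so Shapiro's lemma and Hilbert's Theorem 90 give natural identifications $\H^2(F',Q) = \prod_j \operatorname{Br}(L_j) =: \operatorname{Br}(E \otimes_F F')$, $\H^1(F',G) = \prod_j \H^1(L_j,\PGL_n)$, and $\H^1(F',\tilde G) = \prod_j \H^1(L_j,\GL_n) = 1$. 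The short exact sequence $1 \to \Gbb_m \to \GL_n \to \PGL_n \to 1$ induces, naturally in $F'$, a connecting map $\beta_{F'}\colon \H^1(F',G) \to \H^2(F',Q)$ which on each factor $L_j$ sends the class of a degree-$n$ central simple algebra to its Brauer class; by Wedderburn's uniqueness theorem $\beta_{F'}$ is \emph{injective}, with image the classes of index dividing $n$ on every factor. Moreover $\tilde G(F') \to G(F')$ is surjective, its cokernel lying in $\prod_j \H^1(L_j,\Gbb_m) = 0$. Under these identifications, the sequence to be proved becomes
\[
0 \;\longrightarrow\; \operatorname{Br}(E) \;\longrightarrow\; \prod_{i \in \Vscr}\operatorname{Br}(E \otimes_F F_i) \;\longrightarrow\; \prod_{k \in \Escr}\operatorname{Br}(E \otimes_F F_k),
\]
with the maps $\eta \mapsto (\eta|_{E\otimes_F F_i})_i$ and $(\eta_i)_i \mapsto (\eta_{l(k)}|_{E\otimes_F F_k} - \eta_{r(k)}|_{E\otimes_F F_k})_k$.

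For injectivity, take $\eta \in \operatorname{Br}(E)$ dying in $\operatorname{Br}(E \otimes_F F_i)$ for every $i$; choose $n$ divisible by the index of $\eta$ and set $\xi := \beta_F^{-1}(\eta) \in \H^1(F,G)$. Since $\beta_{F_i}$ is injective and carries the neutral class to $0$, the restriction $\xi|_{F_i}$ is neutral for every $i$, so exactness of \eqref{eq:SixTermMayerVietorisGroup} for $G$ at $\H^1(F,G)$ yields $(\bar g_k)_k \in \prod_k G(F_k)$ with $\xi = \delta_G((\bar g_k)_k)$. Lifting each $\bar g_k$ to some $g_k \in \tilde G(F_k)$ and invoking the naturality of \eqref{eq:SixTermMayerVietorisGroup} in the group variable for $\tilde G \to G$, we get that $\xi$ is the image of $\delta_{\tilde G}((g_k)_k) \in \H^1(F,\tilde G) = 1$; hence $\xi$ is neutral and $\eta = 0$. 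For exactness in the middle, take $(\eta_i)_i$ with $\eta_{l(k)}|_{E\otimes_F F_k} = \eta_{r(k)}|_{E\otimes_F F_k}$ for all $k$; choose $n$ divisible by the index of $\eta_i$ on each field factor of each $E \otimes_F F_i$, and set $\xi_i := \beta_{F_i}^{-1}(\eta_i) \in \H^1(F_i,G)$. On each edge $k$, the classes $\xi_{l(k)}|_{F_k}$ and $\xi_{r(k)}|_{F_k}$ have the same image under the injective map $\beta_{F_k}$, hence coincide; so $(\xi_i)_i$ lies in the equalizer appearing in \eqref{eq:SixTermMayerVietorisGroup} for $G$, and there is $\xi \in \H^1(F,G)$ with $\xi|_{F_i} = \xi_i$ for all $i$. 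Then $\eta := \beta_F(\xi) \in \operatorname{Br}(E) = \H^2(F,Q)$ restricts to $\beta_{F_i}(\xi_i) = \eta_i$ for every $i$, which is what we need.

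I expect no serious obstacle: the content is carried entirely by \eqref{eq:SixTermMayerVietorisGroup} applied to $\PGL_n$, the rest being bookkeeping with Brauer groups. The step deserving genuine care is the injectivity, because bare exactness of \eqref{eq:SixTermMayerVietorisGroup} only identifies the kernel of $\H^1(F,G)\to\prod_i\H^1(F_i,G)$ with the a priori nontrivial image of $\delta_G$; one must really use the lift to $\tilde G$ together with $\H^1(F,\tilde G) = 1$ to conclude. The auxiliary facts --- compatibility of the maps $\beta_{F'}$ with base change, and naturality of \eqref{eq:SixTermMayerVietorisGroup} in the group variable (immediate from pushforward of torsors) --- are formal, and Lemma \ref{lemm:PatchingMorphism} is not needed here.
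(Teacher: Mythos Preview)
Your argument is correct and is genuinely different from the paper's in both halves. For injectivity, the paper does not reduce to $\R_{E/F}\Gbb_m$ but instead invokes the Demarche--Lucchini~Arteche construction \cite{DLA2019Reduction}: it realizes $\eta$ as the Springer class of a homogeneous space $X$ under $\SL_n$, obtains $F_i$-points of $X$ from the local vanishing, and then patches these points using Lemma~\ref{lemm:PatchingMorphism}. Your route---lifting $\delta_G((\bar g_k)_k)$ along $\R_{E/F}\GL_n \twoheadrightarrow \R_{E/F}\PGL_n$ and using $\H^1(F,\R_{E/F}\GL_n)=1$---is more elementary and self-contained; it avoids both the external reference and Lemma~\ref{lemm:PatchingMorphism}, at the cost of needing the functoriality of the connecting map $\delta$ in the group variable, which (as you note) follows by pushing forward the patched torsor and invoking uniqueness.

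For exactness in the middle, the paper introduces an auxiliary factorization system $\Ecal = E \otimes_F \Fcal$ over $E$ (Lemma~\ref{lemm:FiniteExtensionFactorizationSystem}), verifies that patching for vector spaces transfers to $\Ecal$, and then runs \eqref{eq:SixTermMayerVietorisGroup} for $\PGL_n$ over $\Ecal$. You instead apply \eqref{eq:SixTermMayerVietorisGroup} for $\R_{E/F}\PGL_n$ directly over $\Fcal$; via Shapiro's lemma this is the same computation, but it bypasses the construction of $\Ecal$ and the proof of Lemma~\ref{lemm:FiniteExtensionFactorizationSystem} entirely. Both arguments implicitly use finiteness of $\Vscr$ to pick a single $n$ bounding all the local indices; the paper makes the same assumption (``Since $\Vscr'$ is finite\dots''). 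The trade-off is that the paper's Lemma~\ref{lemm:FiniteExtensionFactorizationSystem} is of independent interest for other arguments, whereas your proof is shorter and stays within the original inverse system.
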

\begin{proof}
    Let $\eta \in \H^2(F,Q)$ such that $\eta|_{F_i} = 0$ in $\H^2(F_i,Q)$ for any $i \in \Vscr$. Following the construction of Demarche--Lucchini Arteche \cite[Corollaire 3.3]{DLA2019Reduction}, there exists an integer $n$ and a homogeneous space $X$ of $\SL_{n,F}$ whose ``Springer gerbe'' is represented by the class $\eta \in \H^2(F,Q)$. For any overfield $E/F$, the nonvanishing of $\eta|_E \in \H^2(E,Q)$ is the obstruction to the existence of an $E$-torsor under $\SL_{n}$ dominating $X$. Since $\H^1(E,\SL_n) = 1$ (as follows from Speiser's version of Hilbert's Theorem 90, {\em cf.} \cite[Chapitre X, Corollaire to Proposition 3]{Serre2004Locaux}), one has $\eta|_E = 0$ if and only if $X(E) \neq \varnothing$.

    For any $i \in \Vscr,$ under the assumption that $\eta|_{F_i} = 0$, one has $X(F_i) \neq \varnothing$. Take any point $x_i \in X(F_i)$ and define the morphism
        \begin{equation*}
            \pi_i: \SL_{n,F_i} \to X_{F_i}, \quad q \mapsto x_{i} \ast q.
        \end{equation*}
    (where $\ast$ denotes the right action of $\SL_{n,F}$ on $X$). For any $k \in \Escr$, the fibre $\pi_{l(k),F_k}^{-1}(x_{r(k)})$ is an $F_k$-torsor under $Q$, therefore has an $F_k$-rational point since $\H^1(F_k,Q) = 0$. In other words, there exists $q_k \in Q(F_k)$ such that $x_{l(k)} \ast q_k = x_{r(k)}$. Furthermore, since $\H^1(F,Q) = 0$, by the Mayer--Vietoris sequence \eqref{eq:SixTermMayerVietorisGroup}, there are elements $q'_i \in Q(F_i)$, for $i \in \Vscr$, such that $q'_{l(k)} (q'_{r(k)})^{-1} = q_k$ for any $k \in \Vscr$. Equivalently, one has $x_{l(k)} \ast q'_{l(k)} = x_{r(k)} \ast q'_{r(k)}$. Since $X$ is quasi-projective, there is an affine open subset $U \subseteq X$ containing the points $x_i \ast q'_i$ ($i \in \Vscr$). By Lemma \ref{lemm:PatchingMorphism}, one has $U(F) \neq \varnothing$, {\em a fortiori} $X(F) \neq \varnothing$, hence $\eta = 0$. This shows that the map $\H^2(F,Q) \to \prod_{i \in \Vscr} \H^2(F_i,Q)$ is injective.

    To show exactness at the term $\prod_{i \in \Vscr} \H^2(F_i,Q)$, we need to investigate the effect of tensoring a factorization inverse system of fields with a finite separable extension of its inverse limit.
    
\begin{lemm} \label{lemm:FiniteExtensionFactorizationSystem}
    Let $E/F$ be a finite separable field extension. Write $E = F[X]/(f)$ for some monic irreducible polynomial $f \in F[X]$. For $i \in \Vscr$ (resp. $k \in \Escr$), let
        \begin{equation*}
            f = \prod_{i' \in \Vscr_i} f_{i'} \quad \text{(resp. } f = \prod_{k' \in \Escr_k} f_{k'} \text)
        \end{equation*}
    be the factorization of $f$ in $F_i$ (resp. $F_k$) into monic irreducible factors, where $\Vscr_i$ and $\Escr_k$ are finite sets of indices. Let $E_{i'}:=F_i[X]/(f_{i'})$ (resp. $E_{k'}:=F_k[X]/(f_{k'})$) for $i' \in \Vscr_i$ (resp. $k' \in \Escr_{k'}$. Consider the graph $\Gamma':=(\Vscr',\Escr')$, where
        \begin{equation*}
            \Vscr':=\bigsqcup_{i \in \Vscr} \Vscr_i \quad \text{and} \quad \Escr':=\bigsqcup_{k \in \Escr} \Escr_k,
        \end{equation*}
    and where $k' \in \Escr_k$ is incident on $i' \in \Vscr_i$ if and only if $k$ is incident on $i$ and $f_{k'}$ is a factor of $f_{i'}$ in $F_k[X]$ ({\em i.e.}, one has a natural inclusion $E_{i'} \hookrightarrow E_{k'}$ extending the inclusion $F_i \hookrightarrow F_k$). Then $\Ecal = E \otimes_F \Fcal:= ((E_{i'})_{i' \in \Vscr'}, (E_{k'})_{k' \in \Escr'})$ is an inverse factorization system of fields with inverse limit $E$. If patching for finite-dimensional vector spaces holds over $\Fcal$, then so does it over $\Ecal$.
\end{lemm}
\begin{proof}
     For $i \in \Vscr$ and $k \in \Escr$, one has
        \begin{equation*}
            E \otimes_F F_i = \prod_{i' \in \Vscr_i} E_{i'} \quad \text{and} \quad E \otimes_F F_k = \prod_{k' \in \Escr_k} E_{k'},
        \end{equation*}
    Applying the exact functor $E \otimes_F -$ to the equalizer diagram
        \begin{equation*}
            0 \to F \to \prod_{i \in \Vscr} F_i \rightrightarrows \prod_{k \in \Escr} F_k
        \end{equation*}
    yields an equalizer diagram
        \begin{equation*}
            0 \to E \to \prod_{i' \in \Vscr'} E_{i'} \rightrightarrows \prod_{k' \in \Escr'} E_{k'}
        \end{equation*}
    in the category of commutative rings, where the double arrows are the inclusions $E_{i'} \hookrightarrow E_{k'}$ whenever $k'$ is incident on $i'$. Thus, $E$ is the inverse limit of $\Ecal$. Note that the reduction graph $\Gamma' = (\Vscr',\Escr')$ is equipped with a natural orientation induced from that of $\Gamma$, in such a way that for any edge $k' \in \Escr_k$ (where $k \in \Escr$), one has $l(k') \in \Vscr_{l(k)}$ and $r(k') \in \Vscr_{r(k)}$. 

    Assume that patching for finite-dimensional vector spaces holds over $\Fcal$. We proceed to show that it is also the case over $\Ecal$. By \cite[Proposition 2.2]{HHK2015LocalGlobal}, this amounts to showing that simultaneous factorization holds for $\GL_n$ over $\Ecal$ for any integer $n \ge 1$. Indeed, let $G:=\R_{E/F}(\GL_{n,E})$. Since $\H^1(F,G) = \H^1(E,\GL_n) = 1$ by the ``nonabelian Shapiro lemma's'' (Eckmann--Faddeev--Shapiro, see {\em e.g.} \cite[Chapter VII, Lemma 29.6]{KMRT19998Book}) and Speiser's version of Hilbert's Theorem 90 \cite[Chapitre X, Proposition 3]{Serre2004Locaux}, the Mayer--Vietoris sequence \eqref{eq:SixTermMayerVietorisGroup} tells us that simultaneous factorization holds for $G$ over $\Fcal$, that is, the map
        \begin{equation*}
            \prod_{i \in \Vscr} G(F_i) \to \prod_{k \in \Escr} G(F_k), \quad (g_i)_{i \in \Vscr} \mapsto (g_{r(k)}^{-1} g_{l(k)})_{k \in \Escr}
        \end{equation*}
    is surjective. By definition of Weil restriction, the map
        \begin{equation*}
            \prod_{i' \in \Vscr'} \GL_{n}(E_{i'}) \to \prod_{k' \in \Escr'} \GL_{n}(E_{k'}), \quad (g_{i'})_{i' \in \Vscr'} \mapsto (g_{r(k')}^{-1} g_{l(k')})_{k' \in \Escr'}
        \end{equation*}
    is surjective. Therefore, patching for finite-dimensional vector spaces holds over $\Ecal$ as claimed.
\end{proof}

    Back to the proof of Theorem \ref{thm:PatchingH2QuasiTrivialTorus}. Let us proceed to show exactness at the term $\prod_{i \in \Vscr} \H^2(F_i,Q)$. We may assume that $Q = \R_{E/F}(\Gbb_{m,E})$ for some finite separable field extension $E/F$. Given a family $(\eta_i)_{i \in \Vscr} \in \prod_{i \in \Vscr} \H^2(F_i,Q)$ such that $\eta_{l(k)}|_{F_k} = \eta_{r(k)}|_{F_k}$ for all $k \in \Escr$, we want to show it comes from $\H^2(F,Q)$.
    
    Let $\Ecal = ((E_{i'})_{i' \in \Vscr'},(E_{k'})_{k' \in \Escr'})$ be the factorization inverse system constructed in Lemma \ref{lemm:FiniteExtensionFactorizationSystem}. By Shapiro's lemma, the given family corresponds to a family $(\eta_{i'})_{i' \in \Vscr'} \in \prod_{i' \in \Vscr'} \H^2(E_{i'},\Gbb_m)$ such that $\eta_{l(k')}|_{E_{k'}} = \eta_{r(k')}|_{E_{k'}}$ for all $k' \in \Escr'$. Since $\Vscr'$ is finite, we may choose an integer $n$ such that each $\eta_{i'}$ comes from an element $x_{i'} \in \H^1(E_{i'},\PGL_n)$. For $k' \in \Escr'$, the ``connecting map'' $\H^1(E_{k'},\PGL_n) \to \H^2(E_{k'},\Gbb_m)$ (see {\em e.g.} \cite[Chapitre I, \S{5.7}]{Serre1994Galois}) is injective because Brauer equivalent central simple algebras of the same degree are isomorphic. It follows that $x_{l(k')}|_{E_{k'}} = x_{r(k')}|_{E_{k'}}$. Since patching for finite-dimensional vector spaces holds over $\Fcal$, it also holds over $\Ecal$ by virtue of Lemma \ref{lemm:FiniteExtensionFactorizationSystem}. Therefore, by the Mayer--Vietoris sequence  \eqref{eq:SixTermMayerVietorisGroup} for $\PGL_n$, there is $x \in \H^1(E,\PGL_n)$ such that $x|_{E_{i'}} = x_{i'}$ for all $i' \in \Vscr'$. Its image $\eta' \in \H^2(E,\Gbb_m)$ satisfies $\eta'|_{E_{i'}} = \eta_{i'}$ for all $i' \in \Vscr'$. Again, by Shapiro's lemma, $\eta'$ corresponds to an element $\eta \in \H^2(F,Q)$ with the property that $\eta|_{F_i} = \eta_i$ for all $i \in \Vscr$. The theorem is proved.
\end{proof}

Next, we show that ``patching for $\Hbb^1$'' holds for short complexes of tori. The proof is by d\'evissage. We first need the following homological algebra lemma.

\begin{lemm} \label{lemm:PullbackPushout}
    Consider a commutative square
        \begin{equation} \label{eq:PullbackPushout}
            \xymatrix{
                A \ar[r]^{\del} \ar[d]^{f} & B \ar[d]^{g} \\
                A' \ar[r]^{\del'} & B'
            }
        \end{equation}
    in an abelian category. Assume either of the following.
    \begin{enumerate}
        \item \label{lemm:PullbackPushout:1} $f$ is a monomorphism and \eqref{eq:PullbackPushout} is cocartesian (a pushout diagram).
        \item \label{lemm:PullbackPushout:2} $g$ is an epimorphism and \eqref{eq:PullbackPushout} is cartesian (a pullback diagram).
    \end{enumerate}
    Then, the vertical arrows yield a quasi-isomorphism $[A \xrightarrow{\del} B] \cong [A' \xrightarrow{\del'} B']$ of complexes.
\end{lemm}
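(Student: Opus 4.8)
\emph{Proof plan.} The commutative square \eqref{eq:PullbackPushout} is exactly a morphism of two-term complexes $\phi = (f,g)\colon [A\xrightarrow{\del} B]\to[A'\xrightarrow{\del'} B']$, and a morphism of complexes is a quasi-isomorphism precisely when its mapping cone is acyclic; so the plan is to write this cone down and to check that each of the two hypotheses makes it exact. Up to a choice of signs, the cone is the three-term complex
\[
    \operatorname{Cone}(\phi)\colon\qquad A \xrightarrow{\ \binom{-\del}{f}\ } B\oplus A' \xrightarrow{\ (g,\,\del')\ } B',
\]
concentrated in degrees $-2,-1,0$; the composite of its two differentials vanishes exactly because $g\del = \del' f$, i.e.\ because \eqref{eq:PullbackPushout} commutes.

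The remaining ingredient is the standard description of pushouts and pullbacks in an abelian category through this very sequence: the square \eqref{eq:PullbackPushout} is cocartesian if and only if $(g,\del')\colon B\oplus A'\to B'$ is a cokernel of the map $A\to B\oplus A'$ displayed above --- equivalently, $A\to B\oplus A'\to B'\to 0$ is exact --- and it is cartesian if and only if that same map $A\to B\oplus A'$ is a kernel of $(g,\del')$, equivalently $0\to A\to B\oplus A'\to B'$ is exact (all this up to the harmless sign automorphism $\id_B\oplus(-\id_{A'})$ of $B\oplus A'$, which is what reconciles the mapping-cone signs with the pushout/pullback signs). In case \ref{lemm:PullbackPushout:1} the cocartesian hypothesis gives exactness of $\operatorname{Cone}(\phi)$ at $B\oplus A'$ and at $B'$, while $f$ being a monomorphism forces $\binom{-\del}{f}$ to be a monomorphism (its component $f$ already is one), so $\operatorname{Cone}(\phi)$ is also exact at $A$. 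In case \ref{lemm:PullbackPushout:2}, dually, the cartesian hypothesis gives exactness at $A$ and at $B\oplus A'$, while $g$ being an epimorphism forces $(g,\del')$ to be an epimorphism (it already is after precomposition with the inclusion $B\hookrightarrow B\oplus A'$), so $\operatorname{Cone}(\phi)$ is exact at $B'$. Either way the cone is acyclic, hence $\phi$ is a quasi-isomorphism.

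The only delicate point I anticipate is keeping the signs in the mapping-cone differential consistent with those in the standard pushout/pullback exact sequences, which is pure bookkeeping. Should one wish to bypass the mapping cone, there is the following alternative: under either hypothesis the square is in fact bicartesian (a cocartesian square in which $f$ is a monomorphism is automatically cartesian, and dually), so $0\to A\to B\oplus A'\to B'\to 0$ is a short exact sequence; as this condition is symmetric in $B$ and $A'$, the transposed square is bicartesian as well, and for any bicartesian square the horizontal map $\del$ restricts to an isomorphism $\Ker f\xrightarrow{\sim}\Ker g$ and induces an isomorphism $\Coker f\xrightarrow{\sim}\Coker g$. Transposing back, this says that $f$ restricts to an isomorphism $\Ker\del\xrightarrow{\sim}\Ker\del'$ and $g$ induces an isomorphism $\Coker\del\xrightarrow{\sim}\Coker\del'$, which are exactly the maps induced by $\phi$ on $\H^{-1}$ and $\H^{0}$, so $\phi$ is a quasi-isomorphism.
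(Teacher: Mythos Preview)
Your proof is correct and takes a genuinely different route from the paper's. The paper first reduces case~\ref{lemm:PullbackPushout:1} to case~\ref{lemm:PullbackPushout:2} by passing to the opposite category, then invokes the Freyd--Mitchell embedding theorem to work in a module category, writes the pullback explicitly as $\{(a',b)\in A'\oplus B:\del'(a')=g(b)\}$, and checks by a direct element chase that $f$ and $g$ induce isomorphisms on $\Ker$ and $\Coker$ of the horizontal maps. Your mapping-cone argument avoids Freyd--Mitchell entirely and treats the two cases symmetrically, relying only on the standard characterization of (co)cartesian squares in an abelian category via exactness of $A\to B\oplus A'\to B'$; this is more intrinsic and lighter on prerequisites. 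Your alternative bicartesian argument is closer in spirit to the paper's direct verification of $\H^{-1}$ and $\H^0$, but still stays inside the abelian category. The paper's approach buys concreteness --- every step is a manipulation of elements --- whereas yours buys economy and makes clearer why the two hypotheses are exactly what is needed to close up the cone to a short exact sequence.
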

\begin{proof}
    \ref{lemm:PullbackPushout:1} is obtained from \ref{lemm:PullbackPushout:2} by working in the opposite category. To prove \ref{lemm:PullbackPushout:2}, by virtue of the Freyd--Mitchell embedding theorem \cite[Theorem 4.4]{Mitchell1964Embedding}, it suffices to work in the category of modules over some ring. Since \eqref{eq:PullbackPushout} is a pullback diagram, we may assume that
        \begin{equation*}
            A = \{(a', b) \in A' \oplus B: \del'(a') = g(b)\},
        \end{equation*}
    and that $f$ and $\del$ are induced by the natural projections onto the two respective factors.

    If $(a',b) \in (\Ker \del) \cap (\Ker f)$, then $a' = 0$ and $b = 0$. This proves that $f$ induces an injection $\Ker \del \hookrightarrow \Ker \del'$. As for surjectivity, take $a' \in A'$ with $\del(a') = 0$ ($= g(0)$). Then $(a',0) \in A' \oplus B$ lies in $\Ker \del$ and is mapped to $a'$ under $f$. Therefore, $f$ restricts to an isomorphism $\Ker \del \xrightarrow{\cong} \Ker \del'$. It remains to show that $g$ induces an isomorphism $\Coker \del \xrightarrow{\cong} \Coker \del'$. Surjectivity follows from assumption. As for injectivity, let $b \in B$ such that $g(b) = \del'(a')$ for some $a' \in A'$. Then $(a',b) \in A$ and $\del(a',b) = b$. The claim follows.
\end{proof}

Let us also recall some facts about lattices. When $G$ is a finite group, a {\em $G$-lattice} is, by definition, a finitely generated torsionfree abelian group equipped with an action of $G$. A {\em permutation} $G$-lattice is one that has a $\mathbb{Z}$-basis permuted by $G$. By Cartier duality ({\em cf.} \nameref{subsec:Notations}), an $F$-torus $T$ is quasitrivial if and only if the $\Gal_{E/F}$-lattice $\hat{T}$ is permutation for some (and hence, for any) finite Galois extension $E/F$ splitting $T$.

\begin{prop} \label{prop:PatchingH1ComplexTori}
    Let $\Tscr = [T_1 \xrightarrow{\del} T_2]$ be a complex of $F$-tori (concentrated in degree $-1$ and $0$). If patching for finite-dimensional vector spaces holds over $\Fcal$, then we have an exact sequence
        \begin{equation*}
            \Hbb^1(F,\Tscr) \to \prod_{i \in \Vscr} \Hbb^1(F_i,\Tscr) \to \prod_{k \in \Escr} \Hbb^1(F_k,\Tscr)
        \end{equation*}
    of abelian groups, the last arrow being given by $(\eta_i)_{i \in \Vscr} \mapsto (\eta_{l(k)}|_{F_k} - \eta_{r(k)}|_{F_k})_{k \in \Escr}$.
\end{prop}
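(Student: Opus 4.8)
The plan is to reduce, by means of a flasque resolution, to a patching statement for $\H^2$ of tori, and then to peel that down to the quasitrivial case settled in Theorem~\ref{thm:PatchingH2QuasiTrivialTorus} together with the six-term sequence~\eqref{eq:SixTermMayerVietorisGroup}. (As a sanity check, when $\Tscr = T_2$ is a single torus the assertion is simply the six-term sequence; the flasque-resolution machinery is what handles a nontrivial $T_1$.)

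First I would fix a flasque resolution $\Tscr \simeq [S \xrightarrow{\del} Q]$ over $F$ with $S$ flasque and $Q$ quasitrivial (Theorem~\ref{thm:FlasqueResolution}\ref{thm:FlasqueResolution:1}); base changing it gives a flasque resolution over every overfield $E/F$, functorially in $E$ by Theorem~\ref{thm:FlasqueResolution}\ref{thm:FlasqueResolution:2}. Taking hypercohomology of the distinguished triangle $Q \to \Tscr \to S[1] \xrightarrow{+1}$ and using that $\H^1(E,Q)=0$ for all $E$ (as $Q$ is quasitrivial), one gets for each $E$ a natural isomorphism $\Hbb^1(E,\Tscr)\cong\ker\bigl(\del_*\colon\H^2(E,S)\to\H^2(E,Q)\bigr)$, compatible with all restriction maps. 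Since $\H^2(F,Q)\hookrightarrow\prod_{i}\H^2(F_i,Q)$ by Theorem~\ref{thm:PatchingH2QuasiTrivialTorus}, a short diagram chase (lift a matching family in $\prod_i\H^2(F_i,S)$ to $\H^2(F,S)$; its image in $\H^2(F,Q)$ dies locally, hence globally) shows that the sought middle exactness for $\Hbb^1(\Tscr)$ follows from the analogous statement for $S$: namely that $\H^2(F,S)\to\prod_i\H^2(F_i,S)\to\prod_k\H^2(F_k,S)$ is exact in the middle for any flasque $F$-torus $S$.

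To obtain that, I would embed $S$ into a quasitrivial torus with coflasque cokernel: by \cite[Lemma~0.6]{CTS1987Flasque} applied to the character lattice $\hat S$ and Cartier duality, there is an exact sequence $1\to S\to Q'\to C\to 1$ of $F$-tori with $Q'$ quasitrivial and $C$ coflasque. As $\H^1(E,Q')=0$, the cohomology sequence breaks up, for each $E$, into $0\to\H^1(E,C)\xrightarrow{\partial}\H^2(E,S)\to\ker\bigl(\H^2(E,Q')\to\H^2(E,C)\bigr)\to 0$. Now $\H^2(\bullet,Q')$ is handled by Theorem~\ref{thm:PatchingH2QuasiTrivialTorus} (both exactness and injectivity), while $\H^1(\bullet,C)$ is handled in the middle by the six-term sequence~\eqref{eq:SixTermMayerVietorisGroup} for the linear algebraic group $C$; chasing the two outer exact rows against the middle one patches $\H^2(\bullet,S)$, provided one knows $\Sha^2(F,C)=\ker\bigl(\H^2(F,C)\to\prod_i\H^2(F_i,C)\bigr)=0$ for the coflasque torus $C$.

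The main obstacle is exactly this last vanishing, the coflasque counterpart of the injectivity in Theorem~\ref{thm:PatchingH2QuasiTrivialTorus}. I would attack it by iterating the same device — a further embedding $1\to C\to Q''\to C'\to 1$ with $Q''$ quasitrivial and $C'$ again coflasque, which, using $\Sha^2(F,Q'')=0$ and the injectivity $\partial\colon\H^1(F,C')\hookrightarrow\H^2(F,C)$, identifies $\Sha^2(F,C)$ with $\Sha^1(F,C')$ — thereby reducing to a local--global statement for $\H^1$ of a coflasque torus; this in turn is controlled through the Colliot-Th\'el\`ene--Sansuc theory (and the equivalences recorded in the corollary following Theorem~\ref{thm:MayerVietorisCrossedModule}), ultimately feeding back into the Springer-gerbe/$\SL_n$-homogeneous-space argument used to prove Theorem~\ref{thm:PatchingH2QuasiTrivialTorus}. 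Making these coflasque $\Sha$-terms collapse is where I expect the genuine difficulty to sit; everything else is diagram chasing glued to Theorem~\ref{thm:PatchingH2QuasiTrivialTorus} and the six-term sequence.
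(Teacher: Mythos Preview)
Your reduction is sound up through the identification $\Sha^2(F,C)\cong\Sha^1(F,C')$, but the final step --- forcing $\Sha^1(F,C')=0$ for a coflasque torus $C'$ --- is a genuine gap, and in fact this vanishing is \emph{false} in general in the patching setting. By the six-term sequence \eqref{eq:SixTermMayerVietorisGroup}, $\Sha^1(F,C')$ is the cokernel of $\prod_i C'(F_i)\to\prod_k C'(F_k)$, which depends on the topology of the reduction graph and is typically nonzero when the graph has cycles (see \cite[Theorem 3.1]{CTHHKPS2020Tori} and Remark~\ref{remk:Sha1OfComplexOfTori}). Neither the corollary following Theorem~\ref{thm:MayerVietorisCrossedModule} (which only records an equivalence, not a vanishing) nor the Springer-gerbe argument for quasitrivial tori supplies this; the latter works precisely because $\H^1(-,Q)=0$, a feature coflasque tori lack.

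The paper avoids this obstruction entirely by a different device. It starts from a coflasque resolution $Q\to T\to\Tscr\to Q[1]$ with $Q$ quasitrivial (Theorem~\ref{thm:FlasqueResolution}\ref{thm:FlasqueResolution:3}). Given a matching family $(\eta_i)$, the images in $\prod_i\H^2(F_i,Q)$ patch to a single $a\in\H^2(F,Q)$ by Theorem~\ref{thm:PatchingH2QuasiTrivialTorus}. The key trick is then to \emph{change the resolution so as to kill this specific class}: pick a finite separable $E/F$ with $a|_E=0$, set $Q':=\R_{E/F}Q_E$ (still quasitrivial), and note that by Shapiro's lemma the canonical map $\H^2(F,Q)\to\H^2(F,Q')$ is restriction to $E$, hence sends $a$ to $0$. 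Pushing out along $Q\hookrightarrow Q'$ (Lemma~\ref{lemm:PullbackPushout}) yields a new triangle $Q'\to T'\to\Tscr\to Q'[1]$ in which each $\eta_i$ now lifts to $\H^1(F_i,T')$; since $\H^1(F_k,Q')=0$ these lifts match over edges, and the six-term sequence \eqref{eq:SixTermMayerVietorisGroup} for the torus $T'$ patches them. This tailoring of the resolution to the given family is what replaces the unattainable ``$\Sha^1$ of a coflasque torus vanishes'' in your approach.
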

\begin{proof}
    Since any lattice is a quotient of some permutation lattice, we may embed $T_1$ into some quasitrivial $F$-torus $Q$. Define $T$ via the pushout square
        \begin{equation*}
            \xymatrix{
                T_1 \ar[r]^{\del} \ar[d] & T_2 \ar[d] \\
                Q \ar[r] & T.
            }
        \end{equation*}
    By virtue of Lemma \ref{lemm:PullbackPushout}\ref{lemm:PullbackPushout:1}, we have a distinguished triangle
        \begin{equation*}
            Q \to T \to \Tscr \to Q[1]
        \end{equation*}
    Let $(\eta_i)_{i \in \Vscr} \in \prod_{i \in \Vscr}\Hbb^1(F_i,\Tscr)$ such that $\eta_{l(k)}|_{F_k} = \eta_{r(k)}|_{F_k}$ for all $k \in \Escr$. Its image $(a_i)_{i \in \Vscr}$ in $\prod_{i \in \Vscr} \H^2(F_i,Q)$ then satisfies $a_{l(k)}|_{F_k} = a_{r(k)}|_{F_k}$ for any $k \in \Escr$. By Theorem \ref{thm:PatchingH2QuasiTrivialTorus}, there is an element $a \in \H^2(F,Q)$ such that $a|_{F_i} = a_i$ for any $i \in \Vscr$. Take a finite separable extension $E/F$ such that $a|_E = 0$, and let $Q':=\R_{E/F}(Q_E)$ (which is again a quasitrival $F$-torus). Let $j: Q \hookrightarrow Q'$ denote the canonical inclusion (which corresponds to $\id_{Q_E}$ under the bijection $\Mor_F(Q,\R_{E/F}(Q'_E)) \cong \Mor_E(Q_E,Q_E)$). By Shapiro's lemma and \cite[Proposition 1.6.5]{NSW2008Cohomology}, one has $j_\ast a = 0$ in $\H^2(F,Q')$. Let $T'$ be the $F$-torus defined via the pushout square
        \begin{equation*}
            \xymatrix{
                Q \ar[r] \ar[d]^{j} & T \ar[d] \\
                Q' \ar[r] & T'.
            }
        \end{equation*}
    Again, we have a distinguished triangle $Q' \to T' \to \Tscr \to Q'[1]$, by Lemma \ref{lemm:PullbackPushout}\ref{lemm:PullbackPushout:1}. Furthermore, for any $i \in \Vscr$, the image of $\eta_i$ in $\H^2(F_i,Q')$ is $j_\ast a_i = 0$ by construction. Therefore, $\eta_i$ comes from some element $x_i \in \H^1(F_i,T')$. For any $k \in \Escr$, since $\H^1(F_k,Q') = 0$, the map 
        \begin{equation*}
            \H^1(F_k,T') \to \Hbb^1(F_k,\Tscr)
        \end{equation*}
    is injective. Since $\eta_{l(k)}|_{F_k} = \eta_{r(k)}|_{F_k}$, it follows that $x_{l(k)}|_{F_k} = x_{r(k)}|_{F_k}$. By the Mayer--Vietoris sequence \eqref{eq:SixTermMayerVietorisGroup} for $T'$, there exists $x \in \H^1(F,T')$ such that $x|_{F_i} = x_i$ for any $i \in \Vscr$. Its image $\eta$ in $\Hbb^1(F,\Tscr)$ then has the property that $\eta|_{F_i} = \eta_i$ for all $i \in \Vscr$. The proposition is proved.
\end{proof}

\subsection{The nine-term exact sequence}

We are now ready to establish the main result of this section.

\begin{thm} \label{thm:NineTermMayerVietorisComplexTori}
    Let $\Tscr = [T_1 \xrightarrow{\del} T_2]$ be a complex of $F$-tori. If patching for finite-dimensional vector spaces holds over $\Fcal$, then we have a nine-term exact sequence
        \begin{equation} \label{eq:NineTermMayerVietorisComplexTori}
        \xymatrix{
            0 \ar[r] & \Hbb^{-1}(F,\Tscr) \ar[r] & \prod_{i \in \Vscr} \Hbb^{-1}(F_i,\Tscr) \ar[r] & \prod_{k \in \Escr} \Hbb^{-1}(F_k,\Tscr) \ar[lld]_{\delta} \\
            & \Hbb^{0}(F,\Tscr) \ar[r] & \prod_{i \in \Vscr} \Hbb^{0}(F_i,\Tscr) \ar[r] & \prod_{k \in \Escr} \Hbb^{0}(F_k,\Tscr) \ar[lld]_{\delta}\\
            & \Hbb^1(F,\Tscr) \ar[r] & \prod_{i \in \Vscr} \Hbb^1(F_i,\Tscr) \ar[r] & \prod_{k \in \Vscr} \Hbb^1(F_k,\Tscr).
        }
    \end{equation}
\end{thm}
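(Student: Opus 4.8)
The first two rows of \eqref{eq:NineTermMayerVietorisComplexTori} are precisely the six-term sequence \eqref{eq:SixTermMayerVietorisComplexTori} coming from Theorem \ref{thm:MayerVietorisCrossedModule}, which already supplies exactness at all of their terms except the last one, $\prod_{k\in\Escr}\Hbb^0(F_k,\Tscr)$; and exactness at $\prod_{i\in\Vscr}\Hbb^1(F_i,\Tscr)$, together with the formula for the final arrow, is Proposition \ref{prop:PatchingH1ComplexTori}. So the plan is to supply the remaining pieces: the connecting homomorphism $\delta\colon\prod_{k\in\Escr}\Hbb^0(F_k,\Tscr)\to\Hbb^1(F,\Tscr)$, exactness at $\prod_{k\in\Escr}\Hbb^0(F_k,\Tscr)$, exactness at $\Hbb^1(F,\Tscr)$, and functoriality. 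The whole argument will be a dévissage along one coflasque resolution.

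First I would fix, by Theorem \ref{thm:FlasqueResolution}\ref{thm:FlasqueResolution:3}, a distinguished triangle $Q\to T\to\Tscr\to Q[1]$ with $Q$ a quasitrivial $F$-torus and $T$ an $F$-torus. For every overfield $E/F$, since $\H^1(E,Q)=0$, the associated long exact hypercohomology sequence yields a surjection $T(E)\twoheadrightarrow\Hbb^0(E,\Tscr)$, an injection $\iota_E\colon\H^1(E,T)\hookrightarrow\Hbb^1(E,\Tscr)$, and an identification of $\operatorname{im}\iota_E$ with the kernel of the map $w_E\colon\Hbb^1(E,\Tscr)\to\H^2(E,Q)$ induced by $\Tscr\to Q[1]$; all of these are natural in $E$. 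I would then define $\delta$ by lifting each $x_k$ in a family $\mathbf x=(x_k)_{k\in\Escr}$ to a point $t_k\in T(F_k)$, applying the classical Mayer--Vietoris connecting map $\delta_T\colon\prod_{k\in\Escr}T(F_k)\to\H^1(F,T)$ of \eqref{eq:SixTermMayerVietorisGroup}, and setting $\delta(\mathbf x):=\iota_F\bigl(\delta_T((t_k)_k)\bigr)$. Independence of the chosen lifts is the one point requiring care: two lifts differ by an element of $\prod_k\operatorname{im}(Q(F_k)\to T(F_k))$, on which $\delta_T$ vanishes because, by functoriality of \eqref{eq:SixTermMayerVietorisGroup} in the group, it factors through $\H^1(F,Q)=0$; independence of the resolution then follows from the functoriality in Theorem \ref{thm:FlasqueResolution}\ref{thm:FlasqueResolution:4}.

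Granting this, exactness at $\prod_{k\in\Escr}\Hbb^0(F_k,\Tscr)$ is a direct chase: writing $\partial$ for the arrow $\prod_i\Hbb^0(F_i,\Tscr)\to\prod_k\Hbb^0(F_k,\Tscr)$, one lifts $\Hbb^0$-classes to $T$-points and transfers the equality $\operatorname{im}\partial=\ker\delta$ to the known exactness of \eqref{eq:SixTermMayerVietorisGroup} for $T$ at $\prod_k T(F_k)$, using the injectivity of $\iota_F$. Exactness at $\Hbb^1(F,\Tscr)$ is where the substantive input enters: if $\eta\in\Hbb^1(F,\Tscr)$ is trivial over every $F_i$, then $w_F(\eta)\in\H^2(F,Q)$ is trivial over every $F_i$, so, $Q$ being quasitrivial, the injectivity in Theorem \ref{thm:PatchingH2QuasiTrivialTorus} forces $w_F(\eta)=0$ and hence $\eta=\iota_F(\beta)$ for some $\beta\in\H^1(F,T)$; since each $\iota_{F_i}$ is injective, $\beta$ is trivial over every $F_i$, so exactness of \eqref{eq:SixTermMayerVietorisGroup} for $T$ at $\H^1(F,T)$ gives $\beta=\delta_T((t_k)_k)$, and taking $x_k$ to be the image of $t_k$ in $\Hbb^0(F_k,\Tscr)$ yields $\delta((x_k)_k)=\eta$. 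The opposite inclusion $\operatorname{im}\delta\subseteq\ker\bigl(\Hbb^1(F,\Tscr)\to\prod_i\Hbb^1(F_i,\Tscr)\bigr)$ is immediate from exactness of \eqref{eq:SixTermMayerVietorisGroup} at $\H^1(F,T)$ and naturality of $\iota$.

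Functoriality in $\Tscr$ would then be assembled from the functoriality of Theorem \ref{thm:MayerVietorisCrossedModule}, of Proposition \ref{prop:PatchingH1ComplexTori}, of \eqref{eq:SixTermMayerVietorisGroup} in the group, and of coflasque resolutions (Theorem \ref{thm:FlasqueResolution}\ref{thm:FlasqueResolution:4}). I expect the genuine difficulty to lie not in this theorem but in its two pillars---patching for $\H^2$ of quasitrivial tori (Theorem \ref{thm:PatchingH2QuasiTrivialTorus}) and patching for $\Hbb^1$ of complexes of tori (Proposition \ref{prop:PatchingH1ComplexTori}); granting those, the one delicate step above is the well-definedness and naturality of $\delta$, and it is exactly in order to keep $\Hbb^1$ controlled by the $\H^2$ of a \emph{quasitrivial} torus---rather than of a merely flasque one, as a flasque resolution would give---that one must use the coflasque resolution here.
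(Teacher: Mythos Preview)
Your proposal is correct and follows essentially the same approach as the paper: both use a coflasque resolution $Q\to T\to\Tscr\to Q[1]$, define $\delta$ by lifting along the surjection $T(F_k)\twoheadrightarrow\Hbb^0(F_k,\Tscr)$ and applying the classical $\delta_T$, and invoke Theorem~\ref{thm:PatchingH2QuasiTrivialTorus} to force locally trivial classes in $\Hbb^1(F,\Tscr)$ into the image of $\iota_F$. The paper merely organizes the chase differently, packaging it as two explicit isomorphisms \(\Coker\bigl(\prod_i T(F_i)\to\prod_k T(F_k)\bigr)\cong\Coker\bigl(\prod_i\Hbb^0(F_i,\Tscr)\to\prod_k\Hbb^0(F_k,\Tscr)\bigr)\) and \(\Sha^1(F,T)\cong\Sha^1(F,\Tscr)\) rather than your element-level verification; the content is the same.
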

\begin{proof}
    We have seen sequence \eqref{eq:NineTermMayerVietorisComplexTori} for the first six terms ({\em cf.} \eqref{eq:SixTermMayerVietorisComplexTori}) as well as for the last three terms in Proposition \ref{prop:PatchingH1ComplexTori}. Also recall from its proof that there is a distinguished triangle
        \begin{equation} \label{eq:NineTermMayerVietorisComplexTori:1}
            Q \to T \to \Tscr \to Q[1],
        \end{equation}
    where $Q$ and $T$ are $F$-tori and $Q$ is quasitrivial. We now construct the ``connecting homomorphism'' 
    \begin{equation*}
        \delta: \prod_{k \in \Escr}\Hbb^0(F_k,\Tscr) \to \Hbb^1(F,\Tscr).
    \end{equation*}
    Since $\H^1(E,Q) = 0$ for any overfield $E/F$, taking hypercohomology of \eqref{eq:NineTermMayerVietorisComplexTori:1} yields the solid arrows in the commutative diagram 
    \begin{equation} \label{eq:NineTermMayerVietorisComplexTori:2}
        \xymatrix{
            \prod_{i \in \Vscr} Q(F_i) \ar[r] \ar[d] & \prod_{i \in \Vscr} T(F_i) \ar[r] \ar[d] & \prod_{i \in \Vscr}\Hbb^0(F_i, \Tscr) \ar[r] \ar[d] & 0 \\
            \prod_{k \in \Escr} Q(F_k) \ar[r] \ar[d] & \prod_{k \in \Escr} T(F_k) \ar[r] \ar[d]^{\delta} & \prod_{k \in \Escr} \Hbb^0(F_k, \Tscr) \ar[r] \ar@{-->}[d]^{\delta} & 0 \ar[d] \\
            0 \ar[r] & \H^1(F,T) \ar[r] \ar[d] & \Hbb^1(F,\Tscr) \ar[r] \ar[d] & \H^2(F,Q) \ar[d] \\
            0 \ar[r] & \prod_{i \in \Vscr}\H^1(F_i,T) \ar[r] \ar[d] & \prod_{i \in \Vscr}\Hbb^1(F_i,\Tscr) \ar[r] \ar[d] & \prod_{i \in \Vscr}\H^2(F_i,Q) \ar[d] \\
            0 \ar[r] & \prod_{k \in \Escr}\H^1(F_k,T) \ar[r] & \prod_{k \in \Escr}\Hbb^1(F_k,\Tscr) \ar[r] & \prod_{k \in \Escr}\H^2(F_k,Q)
        }
    \end{equation}
    with exact rows. Its columns are exact in view of \eqref{eq:SixTermMayerVietorisGroup}, Theorem \ref{thm:PatchingH2QuasiTrivialTorus}, and Proposition \ref{prop:PatchingH1ComplexTori}. We claim that the surjection $\prod_{k \in \Escr} T(F_k) \to \prod_{k \in \Escr} \Hbb^0(F_k,\Tscr)$ induces an isomorphism
        \begin{equation} \label{eq:NineTermMayerVietorisComplexTori:3}
            \Coker\left(\prod_{i \in \Vscr} T(F_i) \to \prod_{k \in \Escr} T(F_k) \right) \xrightarrow{\cong} \Coker\left(\prod_{i \in \Vscr} \Hbb^0(F_i,\Tscr) \to \prod_{k \in \Escr} \Hbb^0(F_k,\Tscr) \right).
        \end{equation}
    Indeed, let $a \in \prod_{k \in \Escr} T(F_k)$ such that its image $b \in \prod_{k \in \Escr} \Hbb^0(F_k,\Tscr)$ comes from an element $c \in \prod_{i \in \Vscr} \Hbb^0(F_i,\Tscr)$. Then $c$ can be lifted to an element $d \in \prod_{i \in \Vscr} T(F_i)$. Its image $a'$ in $\prod_{k \in \Escr} T(F_k)$ is mapped to $b \in \prod_{k \in \Escr} \Hbb^0(F_k,\Tscr)$, so that the difference $a - a'$ comes from $\prod_{k \in \Escr}Q(F_k)$, hence also from $\prod_{i \in \Vscr}Q(F_i)$, {\em a fortiori} from $\prod_{i \in \Vscr}T(F_i)$. Therefore, $a = (a-a') + a'$ also comes from $\prod_{i \in \Vscr}T(F_i)$. The claim follows.

    Next, let us show that the inclusion $\H^1(F,T) \hookrightarrow \Hbb^1(F,\Tscr)$ induces an isomorphism
        \begin{equation} \label{eq:NineTermMayerVietorisComplexTori:4}
            \Sha^1(F,T) \xrightarrow{\cong} \Sha^1(F, \Tscr).
        \end{equation}
    Indeed, let $a \in \Hbb^1(F, \Tscr)$ be an element whose image in $\prod_{i \in \Vscr}\Hbb^1(F_i, \Tscr)$ is 0. By exactness of the right column, the image of $a$ in $\H^2(F,Q)$ is $0$, therefore $a$ comes from an element $b \in \H^1(F,T)$. By exactness of the fourth row, one has $b \in \Sha^1(F,T)$. 

    The solid arrow $\delta$ from diagram \eqref{eq:NineTermMayerVietorisComplexTori:2} induces an isomorphism
        \begin{equation*}
            \Coker\left(\prod_{i \in \Vscr} T(F_i) \to \prod_{k \in \Escr} T(F_k) \right) \xrightarrow{\cong} \Sha^1(F,T).
        \end{equation*}
    Combining this with \eqref{eq:NineTermMayerVietorisComplexTori:3} and \eqref{eq:NineTermMayerVietorisComplexTori:4} yields an isomorphism
        \begin{equation*}
            \Coker\left(\prod_{i \in \Vscr} \Hbb^0(F_i,\Tscr) \to \prod_{k \in \Escr} \Hbb^0(F_k,\Tscr) \right) \xrightarrow{\cong} \Sha^1(F, \Tscr).
        \end{equation*}
    Therefore, we obtain the dashed arrow in \eqref{eq:NineTermMayerVietorisComplexTori:2}, whose kernel is the image of $\prod_{i \in \Vscr} \Hbb^0(F_i,\Tscr)$ and whose image is $\Sha^1(F,\Tscr)$. The nine-term exact sequence \eqref{eq:NineTermMayerVietorisComplexTori} is hence established.
\end{proof}

The exact sequence \eqref{eq:NineTermMayerVietorisComplexTori} is functorial with respect to the complex $\Tscr$. To see this, we need the notion of {\em coflasque resolution} \cite[\S{0.5}]{CTS1987Flasque}. Recall that a $G$-lattice $C$ (where $G$ is a finite group) is said to be {\em coflasque} if $\H^1(H, C) = 0$ for any subgroup $H \leq G$. This is equivalent to the property that $\Ext^1_{\Zbb[G]}(P, C) = 0$ for any permutation $G$-lattice $P$. We say that an $F$-torus $T$ is coflasque if $\hat{T}$ is a $\Gal_{E/F}$-coflasque lattice for some (and hence, for any) finite Galois extension $E/F$ splitting $T$.

\begin{lemm} \label{lemm:CoflasqueResolution}
    Let $G$ be a finite group.

    \begin{enumerate}
        \item \label{lemm:CoflasqueResolution:1} Any short complex $\Lscr = [L_1 \to L_2]$ of $G$-lattices (where $L_1$ and $L_2$ are placed in respective degrees $-1$ and $0$) admits a {\em coflasque resolution}, {\em i.e.}, is quasi-isomophic to a complex of $G$-lattices of the form $[C \to P]$, where $C$ is coflasque and $P$ is permutation.

        \item \label{lemm:CoflasqueResolution:2} Let $\Lscr$ and $\Lscr'$ be short complexes of $G$-lattices. Let $\Lscr \simeq [C \to P]$ and $\Lscr' \simeq [C' \to P']$ be coflasque resolutions. Then any morphism $f: \Lscr \to \Lscr'$ be a morphism in the derived category of $G$-lattices fits in a commutative diagram of distinguished triangles
        \begin{equation*}
            \xymatrix{
                C \ar[r] \ar[d] & P \ar[r] \ar[d] & \Lscr \ar[r] \ar[d]^{f} & C[1] \ar[d]\\
                C' \ar[r] & P' \ar[r] & \Lscr' \ar[r] & C'[1].
            }
        \end{equation*}  
    \end{enumerate}
     Here, recall that each $G$-lattice is identified to the corresponding one-term complex concentrated in degree $0$.
\end{lemm}
\begin{proof}
\begin{enumerate}
    \item By (0.6.4) from \cite[Lemma 0.6]{CTS1987Flasque}, there is an exact sequence
        \begin{equation*}
            0 \to L_1 \to C_1 \to P_1 \to 0,
        \end{equation*}
    of $G$-lattices, where $P_1$ is permutation and $C_1$ is coflasque. Let $M$ be the $G$-module $C_1 \sqcup_{L_1} L_2$. By Lemma \ref{lemm:PullbackPushout} \ref{lemm:PullbackPushout:1}, $\Lscr$ is quasi-isomorphic to $[C_1 \to M]$, {\em i.e.}, we have a distinguished triangle
        \begin{equation*}
            C_1 \to M \to \Lscr \to C_1[1].
        \end{equation*}
    For any subgroup $H \le G$, since $\H^1(H,C_1) = 0$, the corresponding hypercohomology sequence yields a surjection $M^H \to \Hbb^0(H,\Lscr)$. By (0.6.1) from \cite[Lemma 0.6]{CTS1987Flasque}, there is an exact sequence
        \begin{equation*}
            0 \to C_2 \to P \to M \to 0,
        \end{equation*}
    where $C_2$ (resp. $Q$) is a coflasque (resp. permutation) $G$-lattice. We have $\H^1(H,C_2) = 0$ for any subgroup $H \le G$, so the map $P^H \to M^H$ is surjective. Let $C$ be the $G$-lattice $P \times_M C_1$. By Lemma \ref{lemm:PullbackPushout} \ref{lemm:PullbackPushout:2}, we have quasi-isomorphisms $\Lscr \simeq [C_1 \to M] \simeq [C \to P]$. We claim that $C$ is coflasque. Indeed, taking hypercohomology of the distinguished triangle
        \begin{equation*}
            C \to P \to \Lscr \to C[1]
        \end{equation*}
    yields an exact sequence
        \begin{equation*}
            Q^H \to \Hbb^0(H, \Lscr) \to \H^1(H,C) \to \H^1(H,Q) = 0.
        \end{equation*}
    for any subgroup $H \le G$. The first arrow is surjective because it is the composite
        \begin{equation*}
            Q^H \to M^H \to \Hbb^0(H,\Lscr),
        \end{equation*}
    each arrow being surjective by construction. Therefore, the map $\Hbb^0(H,\Lscr) \to \H^1(H,C)$ is zero. Since it is also surjective, one has $\H^1(H,C) = 0$. Therefore, $C$ is indeed coflasque.

    \item Viewing that $\Hom_{\Dcal(\Zbb[G])}(P,C'[1]) = \Ext^1_{\Zbb[G]}(P,C') = 0$, the composite
        \begin{equation*}
            P \to \Lscr \xrightarrow{f} \Lscr' \to C'[1]
        \end{equation*}
    is zero. The claim follows from \cite[Proposition 1.1.9]{BBD1982Faisceaux}.
\end{enumerate}
\end{proof}

Taking Lemma \ref{lemm:CoflasqueResolution} (and Cartier duality) into account, the functoriality of \eqref{eq:NineTermMayerVietorisComplexTori} becomes clear. Indeed, in the proof of Theorem \ref{thm:NineTermMayerVietorisComplexTori}, we can take a complex $[Q \to T]$ of $F$-tori quasi-isomorphic to $\Tscr$, with $Q$ quasitrivial and $T$ coflasque.
 
If $M$ is a smooth group of multiplicative type over $F$ (that is, the order of $\hat{M}_{\tors}$ is invertible in $F$), then we can find a surjection $T_1 \twoheadrightarrow T_2$ of $F$-tori with kernel $M$ (so that $[T_1 \to T_2] \simeq M[1]$). Theorem \ref{thm:NineTermMayerVietorisComplexTori} now takes the following form, which, in contrast to \cite[Theorem 2.5.1]{HHK2014LocalGlobal}, does not require any ``global domination'' assumption on Galois cohomology of $\Fcal$.

\begin{coro} \label{coro:NineTermMayerVietorisGroupMultiplicativeType}
    Let $M$ be a smooth $F$-group of multiplicative type. If patching for finite-dimensional vector spaces holds over $\Fcal$, then we have a functorial nine-term exact sequence
        \begin{equation*}
        \xymatrix{
            0 \ar[r] & M(F) \ar[r] & \prod_{i \in \Vscr} M(F_i) \ar[r] & \prod_{k \in \Escr} M(F_k) \ar[lld]_{\delta} \\
            & \H^1(F,M) \ar[r] & \prod_{i \in \Vscr} \H^1(F_i,M) \ar[r] & \prod_{k \in \Escr} \H^1(F_k,M) \ar[lld]_{\delta}\\
            & \H^2(F,M) \ar[r] & \prod_{i \in \Vscr} \H^2(F_i,M) \ar[r] & \prod_{k \in \Escr} \H^2(F_k,M).
        }
        \end{equation*}
\end{coro}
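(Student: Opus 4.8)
The plan is to deduce this from Theorem~\ref{thm:NineTermMayerVietorisComplexTori} by choosing a two-term complex of tori quasi-isomorphic to $M[1]$, as indicated just before the statement. Concretely, I would first dualize: the $\Gal_F$-module $\hat{M}$ is finitely generated and factors through $\Gal(L/F)$ for some finite Galois extension $L/F$, so it admits a surjection $P \to \hat{M}$ from a free $\Zbb[\Gal(L/F)]$-module $P$, which is in particular a permutation (quasitrivial) $\Gal_F$-lattice; its kernel $K$ is a subgroup of the free abelian group underlying $P$, hence again a $\Gal_F$-lattice, giving a short exact sequence $0 \to K \to P \to \hat{M} \to 0$. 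Applying the exact contravariant Cartier duality functor yields a short exact sequence $0 \to M \to T_1 \xrightarrow{\del} T_2 \to 0$ of $F$-groups of multiplicative type, in which $T_1$ is a quasitrivial $F$-torus and $T_2$ is an $F$-torus (as $K$ is torsion-free). Since $M$ is smooth, $\del$ is a smooth (hence \'etale-locally) surjection with smooth kernel, so this is an exact sequence of $\Gal_F$-modules on $\bar{F}$-points; therefore $\Tscr:=[T_1 \xrightarrow{\del} T_2]$ (with $T_1$, $T_2$ in degrees $-1$, $0$) is quasi-isomorphic to $M[1]$ in the derived category of $\Gal_F$-modules, and consequently, for every overfield $E/F$, $\Hbb^{-1}(E,\Tscr) = M(E)$, $\Hbb^{0}(E,\Tscr) = \H^1(E,M)$, and $\Hbb^{1}(E,\Tscr) = \H^2(E,M)$, naturally in $E$.

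Second, I would simply apply Theorem~\ref{thm:NineTermMayerVietorisComplexTori} to $\Tscr$ and substitute these identifications into \eqref{eq:NineTermMayerVietorisComplexTori}. This produces verbatim the asserted nine-term exact sequence of abelian groups: the maps between consecutive terms become the evident restriction maps and the ``difference'' maps $(\eta_i)_{i \in \Vscr} \mapsto (\eta_{l(k)}|_{F_k} - \eta_{r(k)}|_{F_k})_{k \in \Escr}$, while the two slanted arrows are the connecting homomorphisms $\delta$ already constructed there. Nothing further is needed for exactness.

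Third, for the functoriality in $M$, I would upgrade the construction to handle a morphism $\phi\colon M \to M'$ of smooth $F$-groups of multiplicative type. Enlarging $L$, one may assume $\hat{M}$ and $\hat{M'}$ both factor through $\Gal(L/F)$ and pick free $\Zbb[\Gal(L/F)]$-modules $P \to \hat{M}$ and $P' \to \hat{M'}$ as above. Because $P'$ is projective over $\Zbb[\Gal(L/F)]$, the composite $P' \to \hat{M'} \xrightarrow{\hat{\phi}} \hat{M}$ lifts along $P \to \hat{M}$ to a $\Gal_F$-equivariant homomorphism $P' \to P$, which restricts to $K' \to K$ on kernels. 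Dualizing the resulting morphism of short exact sequences of lattices gives a morphism of short exact sequences $(M \to T_1 \to T_2) \to (M' \to T_1' \to T_2')$ whose leftmost vertical arrow is $\phi$; this is a morphism of complexes $\Tscr \to \Tscr'$ which, under the quasi-isomorphisms of the first step, is identified with $\phi[1]$ since it induces $\phi$ on $\Hbb^{-1}$. The functoriality of \eqref{eq:NineTermMayerVietorisComplexTori} with respect to $\Tscr$ (Theorem~\ref{thm:NineTermMayerVietorisComplexTori}) then yields functoriality in $M$; independence of the chosen resolution follows since any two two-term complexes of tori quasi-isomorphic to $M[1]$ are isomorphic in the derived category compatibly with their augmentations to $M[1]$.

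Essentially all the content sits in Theorem~\ref{thm:NineTermMayerVietorisComplexTori}, so the only delicate point is the functoriality step, and within it the equivariant lift $P' \to P$: the key is to choose $P$ and $P'$ \emph{free} over the finite group ring rather than merely permutation, so that projectivity supplies the lift --- there is no averaging over $\Gal_F$ available over $\Zbb$. The one hypothesis that genuinely must be used is the smoothness of $M$, which is exactly what makes the chosen presentation a quasi-isomorphism of $\Gal_F$-modules and hence ensures that $\Hbb^{\bullet}(-,\Tscr)$ computes $\H^{\bullet+1}(-,M)$.
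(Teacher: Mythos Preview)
Your proposal is correct and follows exactly the approach the paper takes: the paper's entire proof is the sentence preceding the corollary, namely that a smooth $F$-group of multiplicative type $M$ sits in a short exact sequence $0 \to M \to T_1 \xrightarrow{\del} T_2 \to 0$ of $F$-groups with $T_1,T_2$ tori, so that $[T_1 \to T_2] \simeq M[1]$ and Theorem~\ref{thm:NineTermMayerVietorisComplexTori} applies. Your write-up simply spells out the construction of this resolution via Cartier duality and adds a careful treatment of functoriality (which the paper leaves implicit), so there is nothing to correct.
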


\begin{remk}
    One can repeat the proof of \cite[Theorem 2.5.1]{HHK2014LocalGlobal} to extend \eqref{eq:NineTermMayerVietorisComplexTori} into a long exact sequence, provided that the factorization system $\Fcal$ satisfies the global domination condition in \cite[Definition 2.3.3]{HHK2014LocalGlobal}.
\end{remk}

%Long exact sequence with global domination.
\section{Applications} \label{sec:Application}

\subsection{Second Galois cohomology of simply connected groups} \label{subsec:H2}

The notion of band\footnote{{\em lien} in French.} and nonabelian second Galois cohomology has been studied in the last 30 years in order to investigate rational points on homogeneous spaces of algebraic groups. We refer to \cite[\S{1 and 2}]{Borovoi1993H2}, \cite[\S{1}]{FSS1998H2}, and \cite[\S{2.2}]{DLA2019Reduction} for a complete treatment on the subject. {\em Grosso modo}, given a perfect field $F$, an $F$-band is a pair $L = (\bar{G},\kappa)$, where $\bar{G}$ is smooth algebraic $\bar{F}$-group and $\kappa: \Gal_F \to \Aut_{\bar{F}\grp}(\bar{G})/\Inn(\bar{G})$ is a continuous homomorphism satisfying certain continuity and algebraicity conditions (here, $\Aut_{\bar{F}\grp}(\bar{G})$ denotes the group of automorphisms of the $\bar{F}$-group $\bar{G}$ and $\Inn(\bar{G}) \cong \bar{G}(\bar{F})/Z(\bar{G})(\bar{F})$ denotes its subgroup of inner automorphisms). The set $\H^2(F,L)$ of nonabelian second Galois cohomology with coefficients in $L$ has at least three different interpretations: in terms of cocycles, of extensions, and of gerbes \cite[\S{2.2.2}]{DLA2019Reduction}.

The Galois outer action $\kappa$ induces a natural Galois action on the center $Z(\bar{G})$. By Galois descent, this yields an $F$-form $Z(L)$ of $Z(\bar{G})$, and we call this algebraic $F$-group the {\em center of $L$}. If $\H^2(F,L) \neq \varnothing$, then the abelian group $\H^2(F,Z(L))$ acts simply transitively on $\H^2(F,L)$ \cite[\S{1.6} and Lemma 1.7]{Borovoi1993H2} (and this action is functorial in $F$).

Let us consider a factorization inverse system of fields $\Fcal = ((F_i)_{i \in \Vscr}, (F_k)_{k \in \Escr})$ whose inverse limit is a field $F$. Corollary \ref{coro:NineTermMayerVietorisGroupMultiplicativeType} immediately yields the following result on ``patching for nonabelian second Galois cohomology''.

\begin{coro} \label{coro:PatchingNonabelianH2}
    Let $L = (\bar{G},\kappa)$ be an $F$-band such that $Z(L)$ is a smooth group of multiplicative type over $F$. If patching for finite-dimensional vector spaces holds over $\Fcal$ and if $\H^2(F,L) \neq \varnothing$, then one has an equalizer diagram
        \begin{equation*}
            \H^2(F,L) \to \prod_{i \in \Vscr}\H^2(F_i,L) \rightrightarrows \prod_{k \in \Escr}\H^2(F_k,L).
        \end{equation*}
    Moreover, the first arrow is injective if and only if $\Sha^2(F,Z(L)) = 0$.
\end{coro}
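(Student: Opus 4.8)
The plan is to twist the problem into the commutative setting and invoke the abelian nine-term Mayer--Vietoris sequence of Corollary~\ref{coro:NineTermMayerVietorisGroupMultiplicativeType} for $M = Z(L)$, using the simply transitive action of $\H^2(-,Z(L))$ on $\H^2(-,L)$ recalled above (\cite{Borovoi1993H2}).

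First I would fix a global class $\eta_0 \in \H^2(F,L)$, available since $\H^2(F,L) \neq \varnothing$; restricting it shows that $\H^2(F_i,L)$ and $\H^2(F_k,L)$ are nonempty for all $i \in \Vscr$ and $k \in \Escr$. As $Z(L)$ is a smooth $F$-group of multiplicative type, so is $Z(L)_E = Z(L_E)$ for every overfield $E/F$, so Corollary~\ref{coro:NineTermMayerVietorisGroupMultiplicativeType} applies to $Z(L)$. The functorial simply transitive action then yields bijections $\phi_E \colon \H^2(E,Z(L)) \xrightarrow{\ \cong\ } \H^2(E,L)$, $c \mapsto c\cdot(\eta_0|_E)$, for $E$ running over $\{F\}\cup\{F_i\}\cup\{F_k\}$, compatible with every restriction map in sight.

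Next I would observe that the $\phi_E$ intertwine the patching diagram for $\H^2(-,L)$ with the patching diagram for $\H^2(-,Z(L))$: the first arrows correspond because $\phi_F(c)|_{F_i} = \phi_{F_i}(c|_{F_i})$, and the two parallel edge arrows correspond because, writing $x_i = \phi_{F_i}(c_i)$, simple transitivity over $F_k$ gives $x_{l(k)}|_{F_k} = x_{r(k)}|_{F_k}$ if and only if $c_{l(k)}|_{F_k} = c_{r(k)}|_{F_k}$. Granting this, the equalizer assertion is formal: by Corollary~\ref{coro:NineTermMayerVietorisGroupMultiplicativeType} the abelian sequence is exact at $\prod_{i}\H^2(F_i,Z(L))$, i.e.\ the image of $\H^2(F,Z(L))$ is exactly the set of families agreeing on all overlaps, and transporting through the $\phi_E$ gives the same statement for $\H^2(-,L)$.

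Finally, for the injectivity criterion I would compute the fibres of $\H^2(F,L) \to \prod_i \H^2(F_i,L)$: if $\eta,\eta'$ restrict equally, write $\eta' = c\cdot\eta$ with $c \in \H^2(F,Z(L))$ the unique element furnished by simple transitivity; then $c|_{F_i}$ fixes $\eta|_{F_i}$, hence $c|_{F_i} = 0$ for all $i$, i.e.\ $c \in \Sha^2(F,Z(L))$, and conversely every such $c$ produces $c\cdot\eta_0$ with the same restrictions as $\eta_0$. So each nonempty fibre is a torsor under $\Sha^2(F,Z(L))$, and the first arrow is injective exactly when $\Sha^2(F,Z(L)) = 0$. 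I do not expect a genuine obstacle here: the whole argument is a ``twisting'' reduction to the commutative case, the only substantive input being Corollary~\ref{coro:NineTermMayerVietorisGroupMultiplicativeType}; the one point needing care is checking that the torsor trivialization is compatible with the composite restrictions $F_i \hookrightarrow F_k$ appearing in the edge maps, which is immediate from functoriality of the action in the field.
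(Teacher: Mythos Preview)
Your proposal is correct and is exactly the argument the paper has in mind: the paper states only that the result ``immediately yields'' from Corollary~\ref{coro:NineTermMayerVietorisGroupMultiplicativeType}, and you have spelled out precisely that reduction via the simply transitive $\H^2(-,Z(L))$-action and a chosen base point $\eta_0$. There is nothing to add.
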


In this above statement, if $\bar{G}$ is connected reductive, then the Galois outer action $\kappa$ arises from an $F$-form $G$ \cite[Proposition 3.1]{Borovoi1993H2}. In particular, one has $\H^2(F,L) \neq \varnothing$. Furthermore, the center $Z(L)$ is an $F$-group of multiplicative type. Corollary \ref{coro:PatchingNonabelianH2} now reads

\begin{coro} \label{coro:PatchingNonabelianH2ConnectedReductive}
    Let $L = (\bar{G},\kappa)$ be an $F$-band, with $\bar{G}$ connected reductive. If $Z(L)$ is smooth and if patching for finite-dimensional vector spaces holds over $\Fcal$, then one has an equalizer diagram
        \begin{equation*}
            \H^2(F,L) \to \prod_{i \in \Vscr}\H^2(F_i,L) \rightrightarrows \prod_{k \in \Escr}\H^2(F_k,L).
        \end{equation*}
    The first arrow is injective if and only if $\Sha^2(F,Z(L)) = 0$.
\end{coro}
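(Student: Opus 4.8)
The plan is to deduce the statement directly from Corollary \ref{coro:PatchingNonabelianH2}; the only work is to check its hypotheses. Patching for finite-dimensional vector spaces holds over $\Fcal$ by assumption, and $Z(L)$ is smooth by assumption. Since $\bar{G}$ is connected reductive, its center $Z(\bar{G})$ is diagonalizable, so its $F$-form $Z(L)$ is of multiplicative type; thus $Z(L)$ is a smooth $F$-group of multiplicative type. Finally, by \cite[Proposition 3.1]{Borovoi1993H2} the Galois outer action $\kappa$ lifts to an honest $F$-form $G$ of $\bar{G}$, so $\H^2(F,L) \neq \varnothing$ (it contains the neutral class $\eta_0 := [G]$); base-changing $G$ shows likewise $\H^2(F_i,L) \neq \varnothing$ and $\H^2(F_k,L) \neq \varnothing$, with neutral classes $\eta_0|_{F_i}$ and $\eta_0|_{F_k}$. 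With all hypotheses verified, Corollary \ref{coro:PatchingNonabelianH2} furnishes both the equalizer diagram and the injectivity criterion.

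If a self-contained argument is wanted rather than a mere appeal to Corollary \ref{coro:PatchingNonabelianH2}, I would unwind the latter as follows. Whenever $\H^2(E,L) \neq \varnothing$, the abelian group $\H^2(E,Z(L))$ acts simply transitively on $\H^2(E,L)$, functorially in $E$; transporting the basepoint $\eta_0$ along restriction gives compatible bijections $\H^2(-,Z(L)) \xrightarrow{\cong} \H^2(-,L)$ over $F$, over the $F_i$, and over the $F_k$, fitting into a commutative ladder with all restriction maps. Under these identifications, the assertion ``a family $(\xi_i)_i$ with $\xi_{l(k)}|_{F_k} = \xi_{r(k)}|_{F_k}$ for all $k$ descends to $\H^2(F,L)$'' becomes exactness of
\begin{equation*}
    \H^2(F,Z(L)) \to \prod_{i \in \Vscr} \H^2(F_i,Z(L)) \to \prod_{k \in \Escr} \H^2(F_k,Z(L)),
\end{equation*}
which is the tail of the nine-term sequence of Corollary \ref{coro:NineTermMayerVietorisGroupMultiplicativeType} (here simple transitivity converts equality of twists into equality of the twisted classes, and the difference $\xi_{l(k)}|_{F_k} - \xi_{r(k)}|_{F_k}$ into the corresponding difference of central classes). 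Likewise, injectivity of $\H^2(F,L) \to \prod_{i \in \Vscr} \H^2(F_i,L)$ translates into injectivity of $\H^2(F,Z(L)) \to \prod_{i \in \Vscr} \H^2(F_i,Z(L))$, that is, $\Sha^2(F,Z(L)) = 0$.

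There is no real obstacle here: all of the substantive input is already packaged into Theorem \ref{thm:NineTermMayerVietorisComplexTori} (hence into Corollary \ref{coro:NineTermMayerVietorisGroupMultiplicativeType}) and into Borovoi's formalism for nonabelian $\H^2$ together with the lifting result \cite[Proposition 3.1]{Borovoi1993H2}. The only mildly delicate point is basepoint bookkeeping --- ensuring that $\eta_0$ and its restrictions are compatible with the maps of the nine-term sequence, so that the passage from a torsor under $\H^2(-,Z(L))$ to the group $\H^2(-,Z(L))$ itself is legitimate --- but this is automatic from the functoriality in $E$ of both the action of $\H^2(E,Z(L))$ on $\H^2(E,L)$ and of the restriction maps. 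Accordingly I would structure the proof as: first carry out the two verifications of the first paragraph and invoke Corollary \ref{coro:PatchingNonabelianH2}; then, if self-containedness is desired, include the torsor-to-group translation of the second paragraph.
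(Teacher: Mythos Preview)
Your proposal is correct and mirrors the paper's own argument: verify that $Z(L)$ is of multiplicative type (since $\bar{G}$ is connected reductive) and that $\H^2(F,L)\neq\varnothing$ via \cite[Proposition 3.1]{Borovoi1993H2}, then invoke Corollary \ref{coro:PatchingNonabelianH2}. The additional self-contained unwinding you offer is fine but not needed.
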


The main result of this section is the following ``weak local--global principle'' for nonabelian second Galois cohomology of simply connected groups.

\begin{thm} \label{thm:LocalGlobalPrincipleH2SimplyConnected}
    Let $L = (\bar{G},\kappa)$ be an $F$-band, with $\bar{G}$ simply connected semisimple. If $Z(L)$ is smooth (that is, of order invertible in $F$) and if patching for finite-dimensional vector spaces holds over $\Fcal$, then one has an equalizer diagram
        \begin{equation*}
            1 \to \H^2(F,L) \to \prod_{i \in \Vscr}\H^2(F_i,L) \rightrightarrows \prod_{k \in \Escr}\H^2(F_k,L).
        \end{equation*}
\end{thm}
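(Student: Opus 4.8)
The plan is to deduce the theorem from Corollary~\ref{coro:PatchingNonabelianH2ConnectedReductive} and a single vanishing statement, the latter being the real content. Since a simply connected semisimple group is connected reductive, Corollary~\ref{coro:PatchingNonabelianH2ConnectedReductive} (applicable under the present hypotheses) already provides the equalizer diagram $\H^2(F,L) \to \prod_{i \in \Vscr}\H^2(F_i,L) \rightrightarrows \prod_{k \in \Escr}\H^2(F_k,L)$, and it asserts that the first arrow is injective if and only if $\Sha^2(F,Z(L)) = 0$. So the whole theorem reduces to proving this vanishing. As $\bar G$ is connected reductive, the outer action $\kappa$ is induced by an $F$-form $G$ of $\bar G$ \cite[Proposition~3.1]{Borovoi1993H2}, which is then simply connected semisimple, and $Z(L) \cong Z(G)$; thus it suffices to show $\Sha^2(F,Z(G)) = 0$.

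Next I would perform a dévissage to an absolutely simple factor. By the structure theory of simply connected semisimple groups, $G \cong \prod_{j=1}^{r}\R_{E_j/F} G_j$ with each $E_j/F$ finite separable and each $G_j$ absolutely almost simple simply connected over $E_j$, so that $Z(G) \cong \prod_{j} \R_{E_j/F} Z(G_j)$. For each $j$ let $\Ecal_j := E_j \otimes_F \Fcal$ be the factorization inverse system of Lemma~\ref{lemm:FiniteExtensionFactorizationSystem}, over which patching for finite-dimensional vector spaces holds. Shapiro's lemma identifies $\H^q(F,\R_{E_j/F}Z(G_j))$ with $\H^q(E_j, Z(G_j))$ and $\H^q(F_i, \R_{E_j/F}Z(G_j))$ with the product of the groups $\H^q(E, Z(G_j))$ over the field factors $E$ of $E_j \otimes_F F_i$, compatibly with restriction, whence $\Sha^2(F, \R_{E_j/F}Z(G_j)) \cong \Sha^2_{\Ecal_j}(E_j, Z(G_j))$. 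Since $\Sha^2$ is additive in its argument, it is enough to treat each factor. Renaming, the problem becomes: $G'$ is absolutely almost simple simply connected over a field $E$ carrying a factorization inverse system $\Ecal$ over which patching holds, $Z(G')$ is smooth, and we must show $\Sha^2_{\Ecal}(E, Z(G')) = 0$.

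Now comes the structural input. The Galois action on $\widehat{Z(G')}$ factors through $\Out(G'_{\bar E})$, which embeds into the automorphism group of the connected Dynkin diagram of $G'$; this group is trivial, $\Zbb/2$, or $S_3$, so the (Galois) splitting extension $E'/E$ of $Z(G')$ has metacyclic Galois group. I would then choose a permutation $\Gal(E'/E)$-lattice $\widehat{T_1}$ surjecting onto $\widehat{Z(G')}$ with kernel $\widehat{T_2}$; by Cartier duality this produces an exact sequence $0 \to Z(G') \to T_1 \to T_2 \to 0$ of $E$-tori in which $T_1$ is quasitrivial and $T_1, T_2$ are split by $E'$, so that $\Tscr := [T_1 \xrightarrow{\del} T_2] \simeq Z(G')[1]$. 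By Theorem~\ref{thm:FlasqueResolution}\ref{thm:FlasqueResolution:1} and its last clause, $\Tscr$ admits a flasque resolution $\Tscr \simeq [S \to Q]$ with $S$ flasque, $Q$ quasitrivial, and both split by $E'$. At this point the theorem of Endo--Miyata \cite{EM1975Tori} applies: a flasque torus split by a metacyclic extension is invertible, i.e.\ a direct factor of a quasitrivial torus, say $S \times_E S' \cong Q'$ with $Q'$ quasitrivial. (Alternatively, one may avoid changing the factorization system: since $\R_{E_j/F}$ corresponds to induction of character lattices and hence preserves flasque and quasitrivial tori and essential uniqueness of flasque resolutions, the flasque part of $Z(G)[1]$ over $F$ is itself a product of Weil restrictions of invertible tori, hence invertible over $F$.)

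Finally, Theorem~\ref{thm:PatchingH2QuasiTrivialTorus} applied over $\Ecal$ gives $\Sha^2_{\Ecal}(E, Q') = 0$, hence $\Sha^2_{\Ecal}(E, S) = 0$ by additivity. The argument of Remark~\ref{remk:Sha1OfComplexOfTori} goes through verbatim over $\Ecal$ and yields $\Sha^1_{\Ecal}(E, \Tscr) \cong \Sha^2_{\Ecal}(E, S) = 0$; since $\Tscr \simeq Z(G')[1]$ we have $\Hbb^1(-, \Tscr) = \H^2(-, Z(G'))$, so $\Sha^1_{\Ecal}(E, \Tscr) = \Sha^2_{\Ecal}(E, Z(G'))$, and therefore $\Sha^2_{\Ecal}(E, Z(G')) = 0$, which completes the dévissage and the proof. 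I expect the main obstacles to be, first, marshalling the structural inputs correctly — the classification of the Galois action on the center of a simply connected group so that, after splitting off the Weil restrictions, the splitting field becomes metacyclic, together with the precise statement of Endo--Miyata — and, second, keeping track of the Shapiro-type identifications through the change of factorization inverse system $\Fcal \rightsquigarrow \Ecal_j$ so that all the Tate--Shafarevich groups line up; the torus-theoretic steps are then formal given Sections~\ref{sec:FlasqueResolution} and~\ref{sec:MayerVietoris}.
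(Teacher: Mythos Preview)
Your proposal is correct and follows essentially the same strategy as the paper: reduce via Corollary~\ref{coro:PatchingNonabelianH2ConnectedReductive} to $\Sha^2(F,Z(L))=0$, d\'evisse $G$ into Weil restrictions of absolutely almost simple simply connected factors, observe that the center of each factor is split by an extension with Galois group a subgroup of $1$, $\Zbb/2$, or $S_3$ (hence metacyclic), apply Endo--Miyata to make the relevant flasque torus invertible, and conclude by Theorem~\ref{thm:PatchingH2QuasiTrivialTorus}. The only difference is a minor packaging choice: the paper stays over $F$ by applying $\R_{F'/F}$ to the short resolution $0\to Z'\to S'\to Q'\to 0$ and uses the resulting injection $\H^2(F,\R_{F'/F}Z')\hookrightarrow \H^2(F,S)$ directly, whereas your main line passes to the induced factorization system $\Ecal_j$ and routes through Remark~\ref{remk:Sha1OfComplexOfTori}; your parenthetical alternative is exactly the paper's route.
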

\begin{proof}
    In view of Corollary \ref{coro:PatchingNonabelianH2ConnectedReductive}, it remains to show that $\Sha^2(F,Z(L)) = 0$. Let $G$ be an $F$-form of $\bar{G}$ inducing the Galois outer action $\kappa$ (which exists by \cite[Proposition 3.1]{Borovoi1993H2}). Now, $G$ is isomorphic to a product of $F$-groups of the form $\R_{F'/F}(G')$, where $F'/F$ is a finite separable extension and $G'$ is an absolutely simple simply connected linear algebraic group over $F'$. It is therefore enough to show that $\Sha^2(F,\R_{F'/F}(Z')) = 0$ for any $G'/F'$ as above, where $Z' := Z(G')$. 

    Based on the classification of $G'$, we know that the finite commutative $F'$-group $Z'$ has one of the following form (see {\em e.g.} \cite[pp. 331---332]{PR1994Group}).
    \begin{enumerate}
        \item $Z' = \mu_{n+1}$ (type $\tensor[^{1}]{A}{_n}$).

        \item $Z' = \Ker(\N_{F''/F'}: \R_{F''/F'}(\mu_{n+1}) \to \mu_{n+1})$, where $F''/F'$ is a quadratic extension (type $\tensor[^{2}]{A}{_n}$).

        \item $Z' = \mu_2$ (types $B_n$, $C_n$, and $E_7$).

        \item $Z' = \mu_4$ or $Z' = \mu_2 \times \mu_2$ (type $\tensor[^{1}]{D}{_n}$).

        \item $Z' = \Ker(\N_{F''/F'}: \R_{F''/F'}(\mu_{4}) \to \mu_{4})$ or $Z' = \R_{F''/F'}(\mu_2)$, where $F''/F'$ is a quadratic extension (type $\tensor[^{2}]{D}{_n}$).

        \item $Z' = \Ker(\N_{F''/F'}: \R_{F''/F'}(\mu_{2}) \to \mu_{2})$, where $F''/F'$ is a cubic extension (types $\tensor[^{3}]{D}{_n}$ and $\tensor[^{6}]{D}{_n}$).

        \item $Z' = \mu_3$ (type $\tensor[^{1}]{E}{_6}$).

        \item $Z' = \Ker(\N_{F''/F'}: \R_{F''/F'}(\mu_{3}) \to \mu_{3})$, where $F''/F'$ is a quadratic extension (type $\tensor[^{2}]{E}{_6}$).

        \item $Z' = 1$ (types $E_8$, $F_4$, and $G_2$).
    \end{enumerate}
    
    In any case, $Z'$ is split by a Galois extension $F''/F'$ such that $\Gal_{F''/F'}$ is either $1$, $\mathbb{Z}/2$, $\mathbb{Z}/3$, or the symmetric group on $3$ symbols. Now, by \cite[Lemma 0.6 (0.6.2)]{CTS1987Flasque} and Cartier duality, there is an exact sequence
        \begin{equation*}
            1 \to Z' \to S' \to Q' \to 1,
        \end{equation*}
    where $S'$ is a {\em flasque} $F'$-torus and $Q'$ is a quasitrivial $F'$-torus, both split by $F''$. Since the Sylow subgroups of $\Gal_{F''/F'}$ are cyclic by construction, it follows from a theorem of Endo--Miyata \cite[Theorem 1.5]{EM1975Tori} that $S'$ is a direct factor of a quasitrival $F'$-torus. Applying the exact functor $\R_{F'/F}$ gives an exact sequence
        \begin{equation*}
            1 \to \R_{F'/F}(Z') \to S \to Q \to 1,
        \end{equation*}
    where $S:=\R_{F'/F}(S')$ (resp. $Q:=\R_{F'/F}(Q')$) is a direct factor of (resp. is itself) a quasitrivial $F$-torus. Since $\H^1(F,Q) = 0$, we have an injection $\H^2(F,\R_{F'/F}(Z')) \hookrightarrow \H^2(F,S)$. We have $\Sha^2(F,S) = 0$ by virtue of Theorem \ref{thm:PatchingH2QuasiTrivialTorus}, which implies $\Sha^2(F,\R_{F'/F}(Z')) = 0$, concluding the proof.
\end{proof}

\subsection{Indices of central simple algebras} \label{subsec:Index}

When $A$ is a central simple algebra over a field, denote by $\ind(A)$ its index, that is, the degree (= square root of the dimension) of the central division algebra Brauer-equivalent to $A$. Let $\Fcal = ((F_i)_{i \in \Vscr}, (F_k)_{k \in \Escr})$ be a factorization inverse system of fields whose inverse limit is a field $F$. In this subsection, we rediscover the following well-known local--global principle. 

\begin{thm} \label{thm:Index}
    Let $A$ be a central simple algebra over $F$. Then $\ind(A) = \lcm_{i \in \Vscr} \ind(A_{F_i})$.
\end{thm}

This theorem was proved in \cite[Theorem 5.1]{HHK2009Applications} using the language of {\em generalized Severi--Brauer varieties} and a local--global principle for rational points on homogeneous spaces under rational groups (although the proof in {\em loc. cit.} was presented in the context of semiglobal fields, it actually works for any factorization inverse system). Here we give an alternative proof.

\begin{proof}[Proof of Theorem \ref{thm:Index}]
    It is clear that $\ind(A_{F_i})$ divides $\ind(A)$ for any $i \in \Vscr$. As for the reverse divisiblity, we shall show that for any $n \ge 1$, $\ind(A_{F_i}) | n$ for all $i \in \Vscr$ implies $\ind(A) | n$. Now, for any overfield $E/F$, the class $[A_E] \in \Br(E) = \H^2(E,\Gbb_m)$ lies in the image of the connecting map $\delta_n: \H^1(E,\PGL_n) \to \H^2(E,\Gbb_m)$ (see {\em e.g.} \cite[Chapitre I, \S{5.7}]{Serre1994Galois}) if and only if $A_E$ is Brauer-equivalent to a central simple algebra over $E$ of degree $n$. This latter is equivalent to $\ind(A_E) | n$. Therefore, if  $\ind(A_{F_i}) | n$ for all $i \in \Vscr$, then $[A_{F_i}] = \delta_n(x_i)$ for some $x_i \in \H^1(F_i,\PGL_n)$. For $k \in \Escr$, by injectivity of $\delta_n$, one has $x_{l(k)}|_{F_k} = x_{r(k)}|_{F_k}$. By the Mayer--Vietoris sequence \eqref{eq:SixTermMayerVietorisGroup} for $\PGL_n$, there exists $x \in \H^1(F,\PGL_n)$ such that $x|_{F_i} = x_i$ for all $i \in \Vscr$. Since the map
        \begin{equation*}
            \H^2(F,\Gbb_m) \to \prod_{i \in \Vscr} \H^2(F_i,\Gbb_m)
        \end{equation*}
    is injective by Theorem \ref{thm:PatchingH2QuasiTrivialTorus}, the equality $\delta_n(x)|_{F_i} = \delta_n(x_i) = [A_{F_i}]$ for all $i \in \Vscr$ implies $[A] = \delta_n(x)$, therefore $\ind(A) | n$. The theorem is proved.
\end{proof}

\begin{remk} \label{remk:Index}
    When $F$ is a field and $L = (\bar{G},\kappa)$ is an $F$-band, the set $\H^2(F,L)$ has a subset of {\em neutral} elements, which correspond bijectively to the $F$-forms of $\bar{G}$ inducing the outer Galois action $\kappa$ (these $F$-forms are then inner forms of each other). Assume that $G$ is such an $F$-form and $n(G) \in \H^2(F,L)$ is the corresponding neutral class. Then, any class $\eta \in \H^2(F,L)$ is uniquely written as $\zeta + n(G)$ for some $\zeta \in \H^2(F,Z(L)) = \H^2(F,Z(G))$ (here, ``$+$'' denotes the simply transitive action of $\H^2(F,Z(L))$ on $\H^2(F,L)$). The class $\eta$ is then neutral if and only if $\zeta$ lies in the image of the connecting map $\delta: \H^1(F,G/Z(G)) \to \H^2(F,Z(G))$ ({\em c.f.} \cite[Chapitre I, \S{5.7}]{Serre1994Galois})  induced by the central isogeny
        \begin{equation*}
            1 \to Z(G) \to G \to G/Z(G) \to 1
        \end{equation*}    
    (see {\em e.g.} \cite[Proposition 2.3]{Borovoi1993H2}). Thereom \ref{thm:Index} (and its proof) shows a stronger local--global principle for neutrality in the nonabelian $\H^2$ Galois cohomology sets of absolutely simple simply connected groups of type $^{1} A_{n-1}$.
\end{remk}

{\bf Funding.} This work was supported by the postdoctoral program of the Fondation Sciences Math\'ematiques de Paris.

{\bf Acknowledgements.} The author appreciates the enlightening discussions with Ramdorai Sujatha on the subject. He is grateful to the anonymous referees for their valuable comments and suggestions. He also thanks the Institut de Math\'ematiques de Jussieu-Paris Rive Gauche, CNRS for excellent working conditions. 

\bibliographystyle{alpha}
\bibliography{ref}
\end{document}